\newtheorem{theorem}{Theorem}
\newtheorem{proposition}[theorem]{Proposition}
\newtheorem{corollary}[theorem]{Corollary}
\newtheorem{lemma}[theorem]{Lemma}
\DeclareMathOperator{\PSL}{PSL}
\newcommand{\bs}[2]{\ensuremath{B_{#1,#2}}} 
\newcommand{\comp}[2]{\ensuremath{\Omega_{#1,#2}}}
\newcommand{\Alt}{\ensuremath{\text{Alt}}}
\newcommand{\Sym}{\ensuremath{\text{Sym}}}
\begin{document}


\title{\textbf{The topology of local commensurability graphs}}

\author{Khalid Bou-Rabee\thanks{K.B. supported in part by NSF grant
    DMS-1405609}\and Daniel Studenmund\thanks{D.S. supported in part
    by NSF grant DMS-1246989}}
\maketitle


\begin{abstract}
We initiate the study of the $p$-local commensurability graph of a group, where $p$ is a prime.
This graph has vertices consisting of all finite-index subgroups of a group, where an edge is drawn between $A$ and $B$ if $[A : A\cap B]$ and $[B: A\cap B]$ are both powers of $p$.
We show that any component of the $p$-local commensurability graph of a group with all nilpotent finite quotients is complete. Further, this topological criterion characterizes such groups.
In contrast to this result, we show that for any prime $p$ the $p$-local commensurability graph of any large group (e.g. a nonabelian free group or a surface group of genus two or more or, more generally, any virtually special group) has geodesics of arbitrarily long length.
\end{abstract}
\vskip.1in
{\small{\bf keywords:}
\emph{commensurability, nilpotent groups, free groups, very large groups}}

\vskip.1in

Let $G$ be a group and $p$ a prime number. 
Recall that two subgroups $\Delta_1 \leq G$ and $\Delta_2\leq G$ are \emph{commensurable} if $\Delta_1 \cap \Delta_2$ is finite-index in both $\Delta_1$ and $\Delta_1$.
We define the \emph{$p$-local commensurability graph} of $G$ to be the graph with vertices consisting of finite-index subgroups of $G$ where two subgroups $A, B \leq G$ are adjacent if and only if $[A: A \cap B][B : A\cap B]$ is a power of $p$.
We denote this graph by $\Gamma_p(G)$.
For a warm-up example, see Figure \ref{fig:sym3}.

The goal of this paper is to draw algebraic information of $G$ from the topology of $\Gamma_p(G)$.

\begin{theorem} \label{thm:nilpotent}
Let $G$ be a group.
The following are equivalent:
\begin{enumerate}
\item For any prime $p$, every component of $\Gamma_p(G)$ is complete.
\item All of the finite quotients of $G$ are nilpotent.
\end{enumerate}
\end{theorem}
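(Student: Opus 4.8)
The plan is to prove the two implications separately, and to reduce everything to finite quotients of $G$.

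First, (2) $\Rightarrow$ (1): Suppose every finite quotient of $G$ is nilpotent, and let $A, B, C$ be finite-index subgroups of $G$ with $A$ adjacent to $B$ and $B$ adjacent to $C$ in $\Gamma_p(G)$; I must show $A$ is adjacent to $C$. Being adjacent to $B$ means $[A : A\cap B]$ and $[B : A\cap B]$ are $p$-powers, and similarly for $B$ and $C$. Replacing $G$ by a quotient by a normal finite-index subgroup contained in $A\cap B\cap C$ (which exists and whose quotient is nilpotent by hypothesis), I may assume $G$ itself is a finite nilpotent group; note this replacement does not change any of the indices in question. A finite nilpotent group is the direct product of its Sylow subgroups, $G = \prod_q G_q$. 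The key structural observation is that a subgroup $H \le G$ is adjacent to $B$ in $\Gamma_p(G)$ precisely when $H$ and $B$ have the same projection to $G_q$ for every prime $q \neq p$ — because the index of an intersection, restricted to each Sylow factor, must be trivial at every prime other than $p$. This "same off-$p$-part" relation is visibly transitive, so $A$ adjacent to $B$ and $B$ adjacent to $C$ force $A$ and $C$ to agree away from $p$, hence to be adjacent. Thus components are complete.

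Second, (1) $\Rightarrow$ (2): I argue the contrapositive. Suppose some finite quotient $Q$ of $G$ is not nilpotent. Since $\Gamma_p(Q)$ embeds into $\Gamma_p(G)$ in an index-preserving way (finite-index subgroups of $Q$ pull back to finite-index subgroups of $G$ containing a fixed normal subgroup, and all indices are preserved), it suffices to produce, for some prime $p$, a non-complete component in $\Gamma_p(Q)$ for a finite non-nilpotent group $Q$. A finite group is nilpotent iff every Sylow subgroup is normal, so non-nilpotence of $Q$ gives a prime $p$ and a Sylow $p$-subgroup $P$ that is not normal. The goal is to exhibit three finite-index (here, just: arbitrary) subgroups $A, B, C$ with $A \sim B \sim C$ but $A \not\sim C$ in $\Gamma_p(Q)$. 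A natural choice is to take $A = P$, to take $C$ a conjugate $gPg^{-1} \neq P$, and to find a $B$ adjacent to both: since $A$ and $C$ are both $p$-groups, $[A : A\cap B]$ and $[B: A\cap B]$ being $p$-powers with $B$ of $p$-power order makes adjacency easy to arrange — for instance $B$ could be a larger $p$-subgroup or $\langle P, gPg^{-1}\rangle_p$-type object; meanwhile $A \not\sim C$ should follow because $[P : P \cap gPg^{-1}]$ is a $p$-power automatically (both are $p$-groups!), so this naive attempt fails and adjacency of any two $p$-subgroups is automatic.

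The genuine obstruction, and the part requiring real care, is therefore the construction in (1) $\Rightarrow$ (2): one cannot use $p$-subgroups to break completeness, so one must build subgroups whose orders involve other primes. The right approach is to use the non-normal Sylow $p$-subgroup $P$ together with a prime $q$ dividing $|N_Q(P)|$ but arranged so that $P$ and a conjugate $P' = gPg^{-1}$ are joined through an intermediate subgroup $B$ containing $P$ with $[B : P]$ a power of $p$ on one side but $[B:B\cap P']$ failing to be — more precisely, I expect the construction to take $A$ and $C$ to be two subgroups each containing a common non-trivial $q$-part (so that a suitable $B$ sits between them via $p$-power indices on each leg) while $A \cap C$ has strictly smaller $q$-part, forcing a non-$p$-power index between $A$ and $C$. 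Concretely, passing to the subgroup generated by $P$ and an element of order $q$ normalizing it, or working inside $P \rtimes C_q \le Q$, should let one write down $A = C_q$ (or a $q$-Sylow), $B = P \rtimes C_q$, and $C = $ a conjugate of $C_q$ lying in $P$'s normalizer differently — chasing the indices there is the main computation. Once a single explicit finite non-nilpotent witness (e.g. $S_3$ with $p = 3$, or $A_4$, or $C_q \rtimes C_p$) is handled by hand, the general non-nilpotent case reduces to it by passing to an appropriate subquotient, since non-nilpotence always yields a subgroup of the form $C_q \rtimes C_p$ acting nontrivially. I expect the bulk of the write-up to be this explicit index bookkeeping in the $(1)\Rightarrow(2)$ direction, with $(2)\Rightarrow(1)$ being short once the Sylow-decomposition reformulation of adjacency is in place.
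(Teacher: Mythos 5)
Your direction (2) $\Rightarrow$ (1) is essentially the paper's argument (Proposition \ref{prop:2implies1}): pass to a finite nilpotent quotient, decompose along Sylow factors, and observe that adjacency in $\Gamma_p$ of a finite nilpotent group is exactly agreement of the Sylow $q$-parts for all $q \neq p$, which is transitive, so components are complete. The one step you assert without justification --- that a subgroup of a finite nilpotent group is the direct product of its projections to the Sylow factors, so that indices of intersections factor prime by prime --- is a genuine (if standard) lemma; the paper isolates it as Lemma \ref{lem:subgroupdecomp}. That half of your plan is sound.

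The gap is in (1) $\Rightarrow$ (2). Your eventual witness triangle is the right one: in a group of the form $P \rtimes C$ with $P$ a normal Sylow $p$-subgroup and $C$ a non-normal complement, the subgroups $A = C$, $B = P\rtimes C$, $C' = gCg^{-1}$ satisfy $A \sim B \sim C'$ in $\Gamma_p$ but $A \not\sim C'$, since $[A : A\cap C']$ is a nontrivial number coprime to $p$; this is the paper's Lemma \ref{lem:solvablenc}, where the witness is a non-normal Hall $p'$-subgroup (a $p$-complement, not the non-normal Sylow $p$-subgroup you start from). But your reduction of an arbitrary finite non-nilpotent $Q$ to such a configuration rests on the claim that non-nilpotence always yields a subgroup of the form $C_q \rtimes C_p$ with nontrivial action, and that claim is false: $\mathrm{SL}_2(3) \cong Q_8 \rtimes C_3$ is non-nilpotent, yet its unique involution is central, so every subgroup of order $6$ is cyclic, and it has no subgroup of index $2$; hence it contains no nonabelian subgroup that is cyclic-by-cyclic of coprime orders. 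Producing a solvable non-nilpotent subgroup inside an arbitrary non-nilpotent finite group is precisely the hard content of this direction, and the paper spends most of Section \ref{sec:nilpotent} on it: Hall's theorem handles solvable $Q$, while non-solvable $Q$ is treated via a nonabelian composition factor, minimal simple groups (Thompson's classification), and the case analysis of Lemma \ref{lem:simplecontainssolvable}. Your plan could in fact be repaired more cheaply than the paper's route by invoking Schmidt's theorem: a subgroup of $Q$ minimal among non-nilpotent subgroups has all proper subgroups nilpotent, hence is solvable with a normal Sylow $p$-subgroup and a non-normal cyclic complement, after which your triangle computation applies and transports into $\Gamma_p(G)$ by Lemma \ref{lem:embeddings}. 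As written, however, with the construction hedged by ``I expect'' and the reduction resting on a false structural statement, the second implication is not proved.
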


\noindent
The proof of Theorem \ref{thm:nilpotent} is in \S \ref{sec:nilpotent}.
The classification of finite simple groups and the structure theory of solvable groups play important roles in our proofs.
Theorem \ref{thm:nilpotent} applies, for example, to Grigorchuk's
group \cite{MR712546}, which is a 2-group and therefore has only
nilpotent finite quotients.

\begin{figure}[bht]
  \begin{center}
    \includegraphics[width=0.6\textwidth]{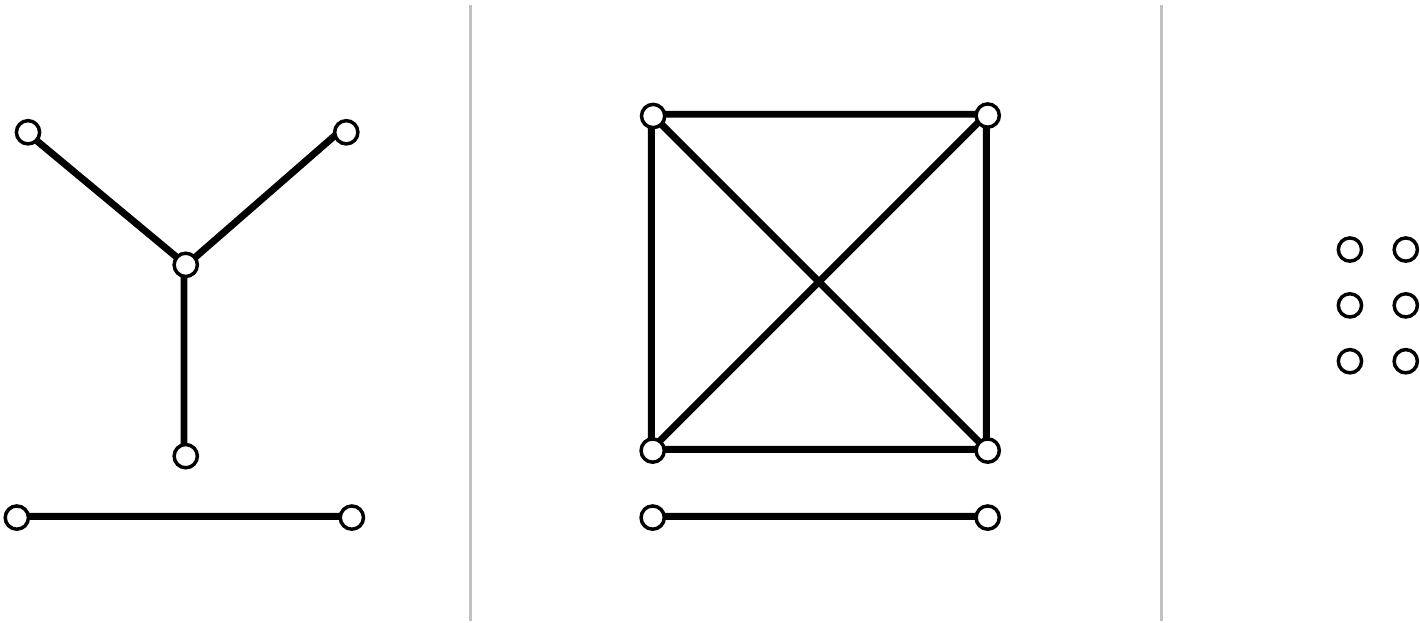}
  \end{center}  \vspace{-10pt}
  \caption{Let $\Sym_3$ be the symmetric group on 3 elements (note $\Sym_3$ is solvable and not nilpotent). The figure above displays $\Gamma_2(\Sym_3), \Gamma_3(\Sym_3)$, and $\Gamma_5(\Sym_3)$ in that order. All $\Gamma_p(\Sym_3)$ for primes $p > 3$ are discrete spaces.}
   \label{fig:sym3}
\end{figure}

In contrast to the above theorem, we show that components of the local commensurability graphs of free groups are far from complete:

\begin{theorem} \label{thm:freegroup}
Let $F$ be a rank two free group.
For any prime $p$ and $N > 0$, there exist infinitely many
geodesics $\gamma$, each in a different component of $\Gamma_p(F)$,
such that the length of each $\gamma$ is greater than $N$.
\end{theorem}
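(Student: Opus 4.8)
\emph{Reduction to finite quotients.} The plan is to push the whole problem into finite quotients of $F$. If $N\trianglelefteq F$ has finite index and $Q=F/N$, then the rule $H\mapsto HN$ carries adjacent (or equal) vertices of $\Gamma_p(F)$ to adjacent (or equal) vertices: for finite‑index $H,K\le F$ the natural surjection $H\twoheadrightarrow HN/(H\cap K)N$ has kernel containing $H\cap K$, so $[HN:(H\cap K)N]$ divides $[H:H\cap K]$, and likewise with $K$; hence if $[H:H\cap K]$ and $[K:H\cap K]$ are powers of $p$ so are $[HN:HN\cap KN]$ and $[KN:HN\cap KN]$. Applying this to any path joining subgroups $A,B\supseteq N$ gives a walk of no greater length joining $A$ to $B$ through subgroups containing $N$, and the induced subgraph on those subgroups is $\Gamma_p(Q)$ (the relevant indices are computed identically in $F$ and in $F/N$). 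Thus $d_{\Gamma_p(F)}(A,B)=d_{\Gamma_p(Q)}(A/N,B/N)$ for $A,B\supseteq N$. Since every finite $2$‑generated group is a quotient of $F=F_2$, it suffices to produce, for each $N$, a finite $2$‑generated group $Q$ containing a geodesic of length $>N$ in $\Gamma_p(Q)$, and then pull it back.

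\emph{Building the finite quotient.} Two observations constrain the search. For a prime $\ell\neq p$, an edge of $\Gamma_p(Q)$ leaves $v_\ell([Q:-])$ unchanged, so it is constant on components; in particular the component of $Q$ consists of the subgroups of $p$‑power index, each adjacent to $Q$, hence has diameter $\le 2$ (and similarly the component of the trivial subgroup is a clique of $p$‑subgroups). So a long geodesic must live in a component whose index‑prime‑to‑$p$ part exceeds $1$, which forces $Q$ to carry a long imprimitivity tower. A natural choice is to take $Q$ to be an iterated wreath product of finite nonabelian simple groups $Q=S_1\wr S_2\wr\cdots\wr S_{2N}$, chosen so that: $Q$ is $2$‑generated (iterated wreath products of nonabelian simple groups are $2$‑generated, by Bhattacharjee's theorem); the ``odd'' factors act on coset spaces of $p$‑power size and the ``even'' factors on coset spaces whose size is a power of a fixed auxiliary prime $q\neq p$ — possible because every prime divides the order of a finite nonabelian simple group having a subgroup of that prime‑power index (e.g.\ $A_p$ for $p\ge 5$, and $\PSL_2(7)$, $A_9$ for $p=2,3$). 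The geodesic itself is then a carefully chosen zig‑zag $C_0,B_1,C_1,B_2,C_2,\dots$ among stabilizers of vertices at successive depths of the block tree, in which consecutive $C_{i-1},C_i$ are overgroups of a common subgroup with $p$‑power index while the ``$q$‑levels'' act as barriers one must cross one at a time.

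\emph{The main obstacle.} The upper bound on the length of this zig‑zag is immediate from its description; the real content is the matching lower bound $d_{\Gamma_p(Q)}(C_0,\text{last term})\ge N$, i.e.\ that no shorter path exists. For this I would attach to each subgroup $H\le Q$ a potential — morally the number of ``$q$‑levels'' of the block tree separating $H$ from the top of its component — and prove that it changes by at most $1$ across every edge of $\Gamma_p(Q)$ while differing by more than $N$ between the endpoints of the zig‑zag. The hard part is (i) identifying the right invariant and (ii) controlling it under \emph{arbitrary} intersections of subgroups, not merely under inclusions; this is the step I expect to be most delicate.

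\emph{Infinitely many geodesics in distinct components.} Once one length‑$(>N)$ geodesic $\gamma$ in $\Gamma_p(Q)$ has been built, choose distinct primes $\ell_1,\ell_2,\dots\notin\{p\}$ with $\ell_i\nmid|Q|$ and set $Q_i=Q\times\Z/\ell_i$, which is again $2$‑generated. Since $\gcd(|Q|,\ell_i)=1$, every subgroup of $Q_i$ is a direct product, and the $\ell_i$‑part of an intersection index is never a nontrivial power of $p$, so $\Gamma_p(Q_i)$ is two disjoint copies of $\Gamma_p(Q)$; thus $\gamma$ reappears inside $\Gamma_p(Q_i)$, and pulling it back along $F\to Q_i$ (via the reduction) yields a geodesic $\gamma_i$ of length $>N$ in $\Gamma_p(F)$ whose initial vertex has index divisible by $\ell_i$. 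Because the prime‑to‑$p$ part of the index is a component invariant, the $\gamma_i$ lie in pairwise distinct components; letting $i$ range over an infinite set completes the proof.
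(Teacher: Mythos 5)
Your reduction to finite quotients is sound (it is the same isometric-embedding observation the paper uses), and your trick for getting infinitely many components, multiplying a fixed quotient $Q$ by cyclic groups $\Z/\ell_i$ of coprime order, works and is a reasonable alternative to the paper's device of varying the cardinality of the underlying set. But the proof has a genuine gap exactly where you say you expect one: the lower bound. The entire content of the theorem is that no short path exists between your chosen vertices, and your argument for this consists of an unspecified ``potential'' attached to subgroups of the iterated wreath product, which is asserted to change by at most $1$ across every edge of $\Gamma_p(Q)$ but is never defined, let alone verified. The difficulty is not cosmetic: an edge of $\Gamma_p(Q)$ is an arbitrary pair of finite-index subgroups whose intersection has $p$-power index in both, and the vertices appearing along a competing path need not be block stabilizers, need not respect the imprimitivity tower, and can be subdirect or diagonal-type subgroups of the wreath product; without a structural classification of the subgroups of $p$-power index that can occur in the relevant component, you cannot control how your potential behaves under intersections, and the claimed ``barrier'' property of the $q$-levels is unproven.

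For comparison, this is precisely the point where the paper spends all of its effort: it works inside $Q=\Alt_X$ and uses Guralnick's theorem on subgroups of prime-power index in finite simple groups to prove that every vertex of the component of $\Alt_S$ is (essentially) of the form $\Alt_T\times P$ with $|T|\in\{p^k,p^k-1\}$ and $P$ a $p$-group of disjoint support, that the set $T$ is uniquely determined, and that adjacency forces $T$ to change by at most one point; the length lower bound then falls out by counting how much $T$ can move per edge. No analogous off-the-shelf classification exists for $p$-power-index subgroups of, and arbitrary intersections inside, an iterated wreath product $S_1\wr\cdots\wr S_{2N}$, so as written your construction is an outline whose central step --- item (i) and (ii) of your own ``main obstacle'' paragraph --- is missing. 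To complete the argument along your lines you would need to prove a structure theorem for the vertices of the component containing your zig-zag (the analogue of the paper's Lemmas on $\Alt_T\times P$) and deduce from it that every edge changes your potential by at most $1$; alternatively, you could adopt the paper's route and take $Q=\Alt_X$ directly.
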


\noindent
We prove Theorem \ref{thm:freegroup} in \S \ref{sec:freegroup}.
A result of Robert Guralnick (which uses the classification of finite simple groups) concerning subgroups of prime power index in a nonabelian finite simple group is used in an essential way in our proof \cite{MR700286}.
Moreover, in our proof we get a clean description of an entire component of the $p$-local commensurability graph of many finite alternating groups. 
See Figure \ref{fig:A7A5p5}, for example.

Our next result demonstrates that arbitrarily long geodesics in the $p$-local commensurability graph of a free group cannot possibly all come from a single component.
We prove this at the end of \S \ref{sec:basic}.

\begin{proposition} \label{prop:freegroupfinitediam}
Let $G$ be any group.
Let $\Omega$ be a connected component of $\Gamma_p(G)$.
Then there exists $C > 0$ such that any two points in $\Omega$ are connected by a path of length less than $C$.
That is, the diameter of $\Omega$ is finite.
Moreover if any vertex of $\Omega$ is a normal subgroup of $G$ then
the diameter of $\Omega$ is bounded above by 3.
\end{proposition}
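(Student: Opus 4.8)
The plan rests on two elementary lattice facts, which I would establish first. (1) If $M$ is normal in $G$ and $A \le G$, then $A \cap M$ is normal in $A$; hence for any $H \le A$ the product $(A\cap M)H$ is a subgroup of $A$, so $[A\cap M : (A\cap M)\cap H] = [(A\cap M)H : H]$ divides $[A : H]$. (2) If $M$ is normal in $G$, then $[AM : (A\cap B)M]$ is the index of the image of $A\cap B$ inside the image of $A$ in $G/M$, so it divides $[A : A\cap B]$. I would also record the normalization that the prime-to-$p$ part of $[G:A]$ is constant along a component $\Omega$: if $A \sim B$ then $[G:A]\,p^a = [G:A\cap B] = [G:B]\,p^b$.

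For finiteness of the diameter, fix $A_0 \in \Omega$ and let $M$ be the normal core of $A_0$ in $G$, so $M$ is normal of finite index and $\bar G := G/M$ is finite. By fact (2), if $A \sim B$ then $[AM : (A\cap B)M]$ and $[BM : (A\cap B)M]$ divide the $p$-powers $[A:A\cap B]$ and $[B:A\cap B]$; since $AM \cap BM \supseteq (A\cap B)M$, the subgroups $AM$ and $BM$ are adjacent or equal. Thus $\rho\colon A \mapsto AM$ is a graph morphism of $\Gamma_p(G)$, so $\rho(\Omega)$ is connected; as it contains $\rho(A_0) = A_0$, it lies in $\Omega$. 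Call it $L$. Every vertex of $L$ contains $M$, so the vertex set of $L$ embeds in the finite set of subgroups of $\bar G$; hence $L$ is a finite connected graph, of finite diameter. This reduces the problem to bounding $d_\Omega(A, AM)$ uniformly in $A$, since then $d_\Omega(A,B) \le d_\Omega(A,AM) + \operatorname{diam}(L) + d_\Omega(BM,B)$ for all $A, B \in \Omega$.

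Bounding $d_\Omega(A, AM)$ is the crux, and I expect it to be the main obstacle. The difficulty is genuine: $[AM : A] = [M : A\cap M]$ divides $[G:M]$, so is bounded, but it need not be a power of $p$, so $A$ and $AM$ need not be adjacent and a connecting path must leave the interval $[A\cap M, AM]$ of the subgroup lattice. My plan is a finiteness argument feeding into an induction on $[G:M]$: the "push-down" $A \mapsto A\cap M$ is likewise a graph morphism $\Gamma_p(G) \to \Gamma_p(M)$ by fact (1), sending $\Omega$ into a single component $\Omega'$ of $\Gamma_p(M)$; one then analyzes the fibres of $\rho$ over the finite graph $L$ — each fibre $\rho^{-1}(C)$ consisting of subgroups $A$ of a fixed $C$ with $[C:A]$ bounded and $A\cap M$ controlled — and reduces the diameter of the relevant portion of $\Omega$ to a statement about $\Omega'$ inside the finite-index subgroup $M$. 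Making this induction precise is where the real work lies; the rest is bookkeeping.

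Finally, suppose $\Omega$ contains a vertex $N$ that is normal in $G$. I claim every $A \in \Omega$ is adjacent to $N$, whence $\operatorname{diam}(\Omega) \le 2$, in particular $\le 3$. I would prove by induction on the $\Omega$-distance from $N$ that both $[A : A\cap N]$ and $[N : A\cap N]$ are powers of $p$; the base case $A = N$ is trivial. For the inductive step take $A \sim B$ with the claim known for $B$. Since $N$ is normal, $(A\cap B)N/N \le BN/N$, so $[A\cap B : A\cap B\cap N]$ divides $[B : B\cap N]$, a $p$-power; and since $N\cap B$ is normal in $B$, fact (1) gives that $[B\cap N : A\cap B\cap N]$ divides $[B : A\cap B]$, a $p$-power. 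As $A\cap N \supseteq A\cap B\cap N$ and $N \supseteq A\cap B\cap N$, it follows that $[A : A\cap N]$ divides $[A:A\cap B]\,[A\cap B : A\cap B\cap N]$ and $[N : A\cap N]$ divides $[N : B\cap N]\,[B\cap N : A\cap B\cap N]$, both products of $p$-powers. Hence $[A:A\cap N]\,[N:A\cap N]$ is a power of $p$, i.e. $A \sim N$, completing the induction and the proof.
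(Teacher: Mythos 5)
The gap is precisely the step you flag as the crux: bounding $d_\Omega(A,AM)$. You leave it to an unspecified induction on $[G:M]$ ("making this induction precise is where the real work lies"), so as written the finiteness claim is not proved. But no induction is needed: the bound is $1$, and it follows from the normalization fact you yourself recorded. You have shown $\rho(\Omega)\subseteq\Omega$, so both $A$ and $AM$ are vertices of $\Omega$; propagating your identity $[G:A]p^a=[G:A\cap B]=[G:B]p^b$ along a path shows the prime-to-$p$ part of the index is constant on $\Omega$, so the prime-to-$p$ parts of $[G:A]$ and $[G:AM]$ agree. Since $A\leq AM$ and $[G:A]=[G:AM][AM:A]$, the index $[AM:A]$ is a power of $p$, i.e.\ $A$ and $AM$ are adjacent or equal. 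Hence $d_\Omega(A,B)\leq 1+\mathrm{diam}(L)+1$ with $L$ finite, which is the desired conclusion. Your worry that ``$[AM:A]$ need not be a power of $p$'' is true only for arbitrary subgroups, not for vertices of $\Omega$. Once this is inserted, your first part coincides with the paper's proof: there $N$ is the normal core of a basepoint, every $B\in\Omega$ satisfies $BN\in\Omega$, and Lemma \ref{lem:samecomp} (constancy of the prime-to-$p$ part) shows $B$ and $BN$ are adjacent, giving diameter at most $|D|+2$ for $D=\{BN: B\in\Omega\}$.

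Your treatment of the ``moreover'' clause is correct, complete, and in fact stronger than required: the divisibility computations in your inductive step all check out, so adjacency to the normal vertex $N$ propagates along edges of $\Omega$, every vertex of $\Omega$ is adjacent to $N$, and the diameter is at most $2$. This differs from the paper's route, which shows each $B\in\Omega$ is adjacent to $BN$ and that the subgroups $BN$ are pairwise adjacent (being pullbacks of $p$-subgroups of $G/N$), yielding the stated bound of $3$; your local propagation argument avoids the set $D$ entirely and sharpens the constant.
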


\noindent
As a consequence of Theorem \ref{thm:freegroup} and Proposition
\ref{prop:freegroupfinitediam}, there exists components of the
$p$-local commensurability graph of a nonabelian free group with no
normal subgroups as vertices (see Corollary \ref{cor:nonormal} at the end of \S \ref{sec:freegroup}). 

Recall that a group is \emph{large} if it contains a finite-index subgroup that admits a surjective homomorphism onto a non-cyclic free group.
Such groups enjoy the conclusion of Theorem \ref{thm:freegroup}. 
See the end of \S \ref{sec:freegroup} for the proof.

\begin{corollary} \label{cor:largegroup}
Let $G$ be a large group.
For any prime $p$ and $N > 0$, there exists infinitely many
geodesics $\gamma$, each in a different component of $\Gamma_p(G)$,
such that the length of each $\gamma$ is greater than $N$.
\end{corollary}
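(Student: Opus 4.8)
The plan is to transfer Theorem~\ref{thm:freegroup} from a rank two free group to $G$ along a surjection defined on a finite-index subgroup. Since $G$ is large it has a finite-index subgroup surjecting onto a non-cyclic free group; replacing that subgroup by its normal core in $G$ — a finite-index subgroup of a non-cyclic free group is again non-cyclic free by Nielsen--Schreier — and then composing with the quotient of the free group that kills all but two basis elements, we obtain a finite-index normal subgroup $H\trianglelefteq G$ together with a surjection $\phi\colon H\twoheadrightarrow F$ onto a rank two free group $F$. I would then pull the geodesics supplied by Theorem~\ref{thm:freegroup} back into $\Gamma_p(G)$ through $\phi$.

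The first block of work is to record how pullback and pushforward along $\phi$ behave on these graphs, all of it elementary index bookkeeping. Because $\phi$ is onto, the pullback $A\mapsto\phi^{-1}(A)$ takes finite-index subgroups of $F$ to finite-index subgroups of $H$ (hence of $G$), is injective, and satisfies $\phi^{-1}(A)\cap\phi^{-1}(B)=\phi^{-1}(A\cap B)$ and $[\phi^{-1}(A):\phi^{-1}(A\cap B)]=[A:A\cap B]$; it therefore both preserves and reflects adjacency, embedding $\Gamma_p(F)$ as an induced subgraph of $\Gamma_p(H)\subseteq\Gamma_p(G)$. Dually, since every vertex of $\Gamma_p(H)$ is a subgroup of $H$, the map $D\mapsto\phi(D)$ is well defined on $\Gamma_p(H)$, and $[\phi(D):\phi(D)\cap\phi(D')]$ divides $[D:D\cap D']$, so this map does not increase distance and is a left inverse of the pullback. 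Thus $\phi^{-1}$ embeds $\Gamma_p(F)$ isometrically into $\Gamma_p(H)$ together with a distance non-increasing retraction; consequently each geodesic of length $L$ in $\Gamma_p(F)$ pulls back to a geodesic of length $L$ in $\Gamma_p(H)$ (its endpoints are at least as far apart as their images under the retraction), and since the retraction carries a connected set to a connected set, preimages of distinct components of $\Gamma_p(F)$ lie in distinct components of $\Gamma_p(H)$. This already produces infinitely many geodesics of length $>N$ in pairwise distinct components of $\Gamma_p(H)$.

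It remains to push this from $\Gamma_p(H)$ up to $\Gamma_p(G)$, and this is the step I expect to be the main obstacle: $\Gamma_p(H)$ sits in $\Gamma_p(G)$ only as an induced subgraph, and a finite-index subgroup $C\not\leq H$ adjacent to one contained in $H$ could in principle shorten distances or merge components. Two clean cases are immediate. If $[G:H]$ is coprime to $p$ and $C\sim D$ with $D\leq H$, then $[C:C\cap H]$ divides both the $p$-power $[C:C\cap D]$ and the index $[G:H]$, hence equals $1$, so $C\leq H$; thus $\Gamma_p(H)$ is a union of connected components of $\Gamma_p(G)$. If $[G:H]$ is a power of $p$, then a short computation (all relevant indices being powers of $p$) shows $D\mapsto D\cap H$ is a distance non-increasing map $\Gamma_p(G)\to\Gamma_p(H)$ that is the identity on $\Gamma_p(H)$, so $\Gamma_p(H)$ is a retract of $\Gamma_p(G)$. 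In either case the geodesics from the previous paragraph survive as geodesics in distinct components of $\Gamma_p(G)$, and the corollary follows.

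For a general large $G$ the index $[G:H]$ is neither coprime to $p$ nor a power of $p$, and the plan is to reduce to the two cases above by interposing a chain $H=H_0\trianglelefteq H_1\trianglelefteq\dots\trianglelefteq H_r=G$ of subgroups normal in $G$ with each successive index a power of $p$ or coprime to $p$ (pull back a subnormal series of $G/H$ whose factors are $p$-groups or $p'$-groups), applying the appropriate fact at each step so that the conclusion of Theorem~\ref{thm:freegroup} propagates upward from the free group through $\Gamma_p(H)$ to $\Gamma_p(G)$. The genuinely delicate point — the heart of the obstacle — is to ensure $H$ can be chosen so that $G/H$ admits such a subnormal series; I would attack this using the freedom in the choice of $H$ among the finite-index normal subgroups of $G$ that surject onto a non-cyclic free group, together with the fact that every finite-index subgroup of $H$ again surjects onto a non-cyclic free group.
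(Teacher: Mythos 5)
Your reduction to a rank-two free quotient of a finite-index normal subgroup $H\trianglelefteq G$, and your pullback/pushforward bookkeeping showing that the geodesics of Theorem \ref{thm:freegroup} appear, with the same lengths and in distinct components, in $\Gamma_p(H)$, are correct and run parallel to the paper's use of Lemma \ref{lem:contraction} and part (1) of Lemma \ref{lem:embeddings}. The gap is in the passage from $\Gamma_p(H)$ to $\Gamma_p(G)$. The fact you need---and which the paper proves as part (3) of Lemma \ref{lem:embeddings} with no hypothesis at all on $[G:H]$---is that for a finite-index \emph{normal} subgroup the inclusion induces an isometric embedding $\Gamma_p(H)\to\Gamma_p(G)$. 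The observation you are missing is that along any path $J_1,\dotsc,J_n$ in $\Gamma_p(G)$ whose initial vertex is contained in $H$, every vertex automatically has $p$-power image in $G/H$: pushing the path forward to $\Gamma_p(G/H)$ via Lemma \ref{lem:contraction} connects each $\pi(J_i)$ to the trivial subgroup by steps of $p$-power index, so $[J_i:J_i\cap H]$ is a power of $p$; Lemma \ref{lem:intonenormal} then shows that intersecting the entire path with $H$ produces a path in $\Gamma_p(H)$ with the same endpoints and no greater length. This subsumes both of your special cases (index coprime to $p$, index a $p$-power) and requires no control on the arithmetic of $[G:H]$ or the structure of $G/H$.

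By contrast, the completion you propose---choosing $H$ so that $G/H$ admits a subnormal series with $p$-group and $p'$-group factors, i.e.\ is $p$-separable---cannot be carried out in general, so the ``delicate point'' you flag is not delicate but impossible. Take $G=\Alt_5 * \Alt_5$ and $p=5$: this $G$ is large (it is virtually a nonabelian free group), yet every nontrivial finite quotient of $G$ is generated by the images of the two simple free factors, each of which is trivial or isomorphic to $\Alt_5$, so every such quotient contains a copy of $\Alt_5$ and hence is not $p$-separable for $p\in\{2,3,5\}$ (subgroups of $p$-separable groups are $p$-separable, and a nonabelian simple group of order divisible by $p$ and by other primes is not). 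The option $H=G$ is unavailable, since $G$ is generated by torsion and therefore admits no surjection onto a noncyclic free group. So your strategy cannot produce the required chain for this $G$ and $p$, and the argument must instead go through the unconditional isometric-embedding statement above, which the paper has already established in \S\ref{sec:basic}.
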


Experiments that led us to the above theorems were done using GAP \cite{GAP4} and Mathematica \cite{MATHEMATICA}.

This paper sits in the broader program of studying infinite groups through their residual properties, which is an area of much activity (see, for instance, \cite{KT2014}, \cite{BK12}, \cite{BM11}, \cite{KG2014}, \cite{BHP14}, \cite{BS13b}, \cite{KM12}, \cite{R12}, \cite{Patel:thesis}, \cite{MR1978431}).
Specifically, a similar object is studied in the recent article \cite{AAHRS2015}. There a graph is constructed with vertices consisting of subgroups of finite index, and an edge is drawn between two vertices if one is a prime-index subgroup (the prime is not fixed) of the other. They show that for every group $G$, their graph is bipartite with girth contained in the set $\{ 4, \infty\}$ and if $G$ is a finite solvable group, then their graph is connected.

\paragraph*{Acknowledgements}
We are grateful to Ben McReynolds and Sean Cleary for useful and stimulating conversations.

\section{Preliminaries and basic facts} \label{sec:basic}

In this section we record some basic facts that will be used
throughout. 
We start with a couple of elementary results.

\begin{lemma} \label{lem:grouptheory}
  Let $\pi: G \to G/N$ be a quotient map. For subgroups $K\leq H \leq
  G$ we have
  \[
  [H : K] = [\pi(H) : \pi(K)] [H\cap N : K\cap N].
  \]
\end{lemma}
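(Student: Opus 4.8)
The plan is to insert the subgroup $L := K(H\cap N)$ between $K$ and $H$ and to compute the two resulting indices separately. Since $N \trianglelefteq G$, the subgroup $H\cap N$ is normal in $H$, so $L$ is genuinely a subgroup of $H$, and $K \le L \le H$. The tower law for indices — which holds for arbitrary groups and possibly infinite indices, because multiplying a set of coset representatives for $H/L$ by a set of coset representatives for $L/K$ produces coset representatives for $H/K$ — gives
\[
[H:K] = [H:L]\,[L:K].
\]
So it suffices to prove $[H:L] = [\pi(H):\pi(K)]$ and $[L:K] = [H\cap N : K\cap N]$.

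For the first identity I would restrict $\pi$ to $H$. The homomorphism $\pi|_H \colon H \to G/N$ has image $\pi(H)$ and kernel $H\cap N$. Since $H\cap N \subseteq L$ we get $\pi(L) = \pi(K)\,\pi(H\cap N) = \pi(K)$, so $\pi|_H$ descends to a well-defined surjection of coset spaces $H/L \to \pi(H)/\pi(K)$; conversely $(\pi|_H)^{-1}(\pi(K)) = K\cdot\ker(\pi|_H) = L$, so this map is injective as well. Hence $[H:L] = [\pi(H):\pi(K)]$. (Equivalently one can pass to $H/(H\cap N) \cong \pi(H)$ and use the correspondence theorem, under which $\pi(K)$ pulls back to $L$.)

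For the second identity I would let $H\cap N$ act on the coset space $L/K$ by left multiplication, which is well defined as $H\cap N \le L$. Because $L = K(H\cap N) = (H\cap N)K$, every coset in $L/K$ has the form $nK$ with $n \in H\cap N$, so the action is transitive, and the stabilizer of the trivial coset is $K\cap(H\cap N) = K\cap N$, the last equality holding since $K \le H$. The orbit–stabilizer correspondence then yields a bijection $L/K \leftrightarrow (H\cap N)/(K\cap N)$, i.e. $[L:K] = [H\cap N : K\cap N]$. Combining both identities with the tower law finishes the proof.

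I do not expect a serious obstacle: this is essentially a bookkeeping argument combining the tower law, a kernel computation, and the second isomorphism theorem. The one point requiring care is that indices may be infinite, so each of the three steps should be carried out via explicit bijections (coset representatives, a kernel image, orbit–stabilizer) rather than by counting orders; and one should note at the outset that normality of $N$ is exactly what makes $L$ a subgroup.
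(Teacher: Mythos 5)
Your proof is correct, but it follows a different route from the paper's. The paper's argument never introduces an intermediate subgroup: it writes $[H:K\cap N]$ in two ways, as $[H:K][K:K\cap N]$ and as $[H:H\cap N][H\cap N:K\cap N]$, rearranges to get $[H:K]/[H\cap N:K\cap N] = [H:H\cap N]/[K:K\cap N]$, and then identifies the right-hand side with $[\pi(H):\pi(K)]$ using $\pi(K)=KN/N\cong K/(K\cap N)$ and the formula $[\pi(H):\pi(K)]=|\pi(H)|/|\pi(K)|$. You instead insert $L=K(H\cap N)$ between $K$ and $H$, split $[H:K]=[H:L][L:K]$ by the tower law, and match the two factors with $[\pi(H):\pi(K)]$ (via the kernel computation $(\pi|_H)^{-1}(\pi(K))=K(H\cap N)=L$) and with $[H\cap N:K\cap N]$ (via orbit--stabilizer, i.e.\ the second isomorphism theorem). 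What your version buys is robustness: every step is an explicit bijection of coset spaces, so the identity holds in cardinal arithmetic even when some of the indices are infinite, whereas the paper's manipulation divides orders and indices and so tacitly assumes the relevant quantities are finite (which is harmless in the paper, since the lemma is only applied to finite-index subgroups and finite quotients). The paper's computation is shorter; yours isolates more clearly where normality of $N$ enters, namely in making $L$ a subgroup and $H\cap N$ normal in $H$.
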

\begin{proof}
  We know that
  \[
  [H:K] [K:K\cap N] = [H:K\cap N] \qquad \text{ and } \qquad [H:H\cap
  N] [ H\cap N : K\cap N ] = [ H : K\cap N].
  \]
  Equating left hand sides and rearranging terms yields
  \[
  \frac{ [H : H\cap N] }{ [ K : K\cap N ] } = \frac{ [H : K] }{
    [H\cap N : K\cap N]}.
  \]
  Because $\pi(K) = KN/N = K/(K\cap N)$, and similarly for $H$, we see
  that 
  \[
  [ \pi(H) : \pi(K) ] = \frac{| \pi(H) |}{| \pi(K) |} = \frac{[H : H\cap N]}{[K: K\cap N]}.
  \]
  The desired result follows.
\end{proof}

\begin{lemma} \label{lem:intonenormal}
Let $N$ be a normal subgroup of $G$ and $p$ a prime.
If $A$ and $N$ are both subgroups of index a power of $p$ in $G$,
then $[G: A \cap N]$ is also a power of $p$.
\end{lemma}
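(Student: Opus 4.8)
The plan is to apply Lemma~\ref{lem:grouptheory} with a well-chosen normal subgroup and track the prime factorizations of the resulting indices. Take the quotient map $\pi : G \to G/N$. First I would apply Lemma~\ref{lem:grouptheory} to the chain $A \cap N \leq A \leq G$ (so $H = A$, $K = A \cap N$, and the normal subgroup is $N$): this gives
\[
[A : A \cap N] = [\pi(A) : \pi(A \cap N)] \cdot [A \cap N : (A\cap N)\cap N] = [\pi(A) : \pi(A\cap N)],
\]
since $(A \cap N) \cap N = A \cap N$, so the last factor is $1$. Thus $[A : A\cap N] = [\pi(A) : \pi(A\cap N)]$, which divides $[\pi(A) : 1] = |G/N|/|\,\cdot\,|$; more usefully, $[A : A\cap N]$ divides $[G : N]$, a power of $p$ by hypothesis. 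Hence $[A : A \cap N]$ is a power of $p$.

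Now I would combine this with the multiplicativity of index along the chain $A \cap N \leq A \leq G$:
\[
[G : A \cap N] = [G : A]\,[A : A\cap N].
\]
By hypothesis $[G : A]$ is a power of $p$, and we have just shown $[A : A \cap N]$ is a power of $p$, so their product $[G : A\cap N]$ is a power of $p$, as desired. (Implicitly all these indices are finite: $[G:A]$ and $[G:N]$ are finite, hence so is $[G : A \cap N] \leq [G:A][G:N]$.)

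I do not anticipate a serious obstacle here — the only thing to be careful about is choosing the right instance of Lemma~\ref{lem:grouptheory} so that the "error term" $[H \cap N : K \cap N]$ collapses to $1$, which happens precisely because we intersect with $N$ itself. An alternative route avoiding Lemma~\ref{lem:grouptheory} entirely is to note $[G : A\cap N] = [G : A][A : A\cap N]$ and $[G : A \cap N] = [G : N][N : A \cap N]$; since $\gcd$ considerations force $[G:A\cap N]$ to be divisible only by $p$ once one observes $[A : A\cap N]$ divides $[G:N]$ via the second homomorphism theorem ($A/(A\cap N) \cong AN/N \leq G/N$). Either way the key point is the embedding $A/(A\cap N) \hookrightarrow G/N$, which is exactly what the cited lemma packages.
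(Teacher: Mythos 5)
Your proof is correct and follows essentially the same route as the paper: both reduce to the observation that $[A : A\cap N] = |\pi(A)|$ divides the $p$-power $[G:N]$ (you via Lemma~\ref{lem:grouptheory} with the error term collapsing, the paper via the direct identification $A/(A\cap N)\cong \pi(A)$), and then conclude from $[G:A\cap N]=[G:A][A:A\cap N]$. The only difference is cosmetic, so nothing further is needed.
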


\begin{proof}
Let $\pi: G \to G/N$ be the quotient map. Then $[A : A\cap N] =
|\pi(A)|$. Because $G/N$ is a $p$-group, it follows that $[A:A\cap N]$
is a power of $p$. Therefore
$[G: A \cap N] = [G: A][A: A \cap N]$ is a power of $p$.
\end{proof}

Our next couple of lemmas give control of local commensurability graphs under some maps.

\begin{lemma} \label{lem:contraction}
  If $G$ is a group, $\pi: G
  \to Q$ is a surjection, and $\gamma$ a path in $\Gamma_p(G)$, then
  $\pi(\gamma)$ is a path in $\Gamma_q(Q)$ with length bounded above
  by the length of $\gamma$.
\end{lemma}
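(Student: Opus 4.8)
The plan is to reduce everything to the case of a single edge. Concretely, I would first prove the following claim: if $A$ and $B$ are finite-index subgroups of $G$ that are adjacent in $\Gamma_p(G)$, then $\pi(A)$ and $\pi(B)$ are finite-index subgroups of $Q$ that are either equal or adjacent in $\Gamma_p(Q)$. Granting this, if $\gamma = (A_0, A_1, \dots, A_n)$ is a path in $\Gamma_p(G)$, then the sequence $(\pi(A_0), \pi(A_1), \dots, \pi(A_n))$ is a walk in $\Gamma_p(Q)$ in which consecutive terms may coincide (an edge of $\gamma$ may collapse to a single vertex); since any walk contains a path between its endpoints of no greater length, the length of $\pi(\gamma)$ is at most $n$, the length of $\gamma$.

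To establish the claim, write $N = \ker \pi$. Finite index of the images is immediate: $[Q : \pi(A)] = [G : AN] \leq [G : A] < \infty$, and likewise for $B$. For adjacency, apply Lemma \ref{lem:grouptheory} to the quotient $\pi$ and the chain $A\cap B \leq A \leq G$, obtaining
\[
[A : A\cap B] = [\pi(A) : \pi(A\cap B)]\,[A\cap N : (A\cap B)\cap N],
\]
so $[\pi(A) : \pi(A\cap B)]$ divides $[A : A\cap B]$, which is a power of $p$ by hypothesis. Since $\pi(A\cap B) \leq \pi(A)\cap\pi(B) \leq \pi(A)$, the index $[\pi(A) : \pi(A)\cap\pi(B)]$ divides $[\pi(A) : \pi(A\cap B)]$ and is therefore a power of $p$; the same argument with the chain $A\cap B \leq B \leq G$ shows $[\pi(B) : \pi(A)\cap\pi(B)]$ is a power of $p$. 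If both of these indices equal $1$ then $\pi(A) = \pi(B)$; otherwise $\pi(A)$ and $\pi(B)$ are adjacent in $\Gamma_p(Q)$, proving the claim.

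I do not anticipate a genuine obstacle here; the lemma is essentially a divisibility bookkeeping argument resting on Lemma \ref{lem:grouptheory}. The one point worth stating carefully is that passing to the quotient can collapse an edge to a single vertex, so the image of a path is a priori only a walk — this is precisely why the conclusion asserts a bound on the length rather than equality, and all one has to check is that this collapsing never lengthens the sequence.
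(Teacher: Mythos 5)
Your proof is correct and follows essentially the same route as the paper: apply Lemma \ref{lem:grouptheory} to get that $[\pi(H):\pi(K)]$ divides $[H:K]$, conclude that adjacent vertices map to adjacent or identical vertices, and note that collapsing edges can only shorten the walk. You are in fact slightly more careful than the paper in distinguishing $\pi(A\cap B)$ from $\pi(A)\cap\pi(B)$, which is a worthwhile point but not a different argument.
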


\begin{proof}
  If $K\leq H \leq G$ then $[ \pi(H) : \pi(K)]$ divides $[ H : K ]$ by
  Lemma \ref{lem:grouptheory}. Therefore adjacent vertices in $\gamma$
  map to adjacent vertices in $\pi(\gamma)$, or are possibly
  identified in $\Gamma_p(Q)$.
\end{proof}

\begin{lemma} \label{lem:embeddings}
  Suppose $G$ is a group and $p$ is prime.
  
  \begin{enumerate}
  \item\label{pt:quotient} If $N$ is a normal subgroup of $G$, then the
  quotient map $\pi:G\to G/N$ induces an isometric graph embedding
  $\Gamma_p(G/N) \to \Gamma_p(G)$ as an induced subgraph.
  \item\label{pt:fi}  If $H$ is a finite-index subgroup of $G$, then the inclusion $i:H\to G$ induces a graph embedding $\Gamma_p(H) \to \Gamma_p(G)$ as an induced subgraph. 
  \item\label{pt:finormal}  If $N$ is a finite-index normal subgroup of $G$, then the inclusion $i:N\to G$ induces an isometric graph embedding $\Gamma_p(N) \to \Gamma_p(G)$ as an induced subgraph. 
  \end{enumerate}
\end{lemma}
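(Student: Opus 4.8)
The plan is to verify each of the three claims in turn, building from the first. For part (\ref{pt:quotient}), recall that finite-index subgroups of $G/N$ correspond bijectively to finite-index subgroups of $G$ containing $N$, via $\bar{A}\mapsto \pi^{-1}(\bar{A})$. So $\pi^{-1}$ gives an injection on vertex sets; I need to check it preserves and reflects adjacency, and then that it is isometric. For adjacency, take $\bar{A},\bar{B}\leq G/N$ and set $A=\pi^{-1}(\bar{A})$, $B=\pi^{-1}(\bar{B})$, so that $A\cap B=\pi^{-1}(\bar{A}\cap\bar{B})$ and $N\leq A\cap B$. Applying Lemma \ref{lem:grouptheory} to $A\cap B\leq A\leq G$ with the quotient $\pi$, the term $[A\cap N: (A\cap B)\cap N]$ is trivial since $N\leq A\cap B$, so $[A:A\cap B]=[\bar{A}:\bar{A}\cap\bar{B}]$, and symmetrically for $B$. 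Hence $[A:A\cap B][B:A\cap B]$ is a power of $p$ if and only if $[\bar{A}:\bar{A}\cap\bar{B}][\bar{B}:\bar{A}\cap\bar{B}]$ is, which is exactly the statement that $\pi^{-1}$ is a graph embedding onto an induced subgraph. Since the image is an induced subgraph, paths in the image are paths in $\Gamma_p(G)$ of the same length, giving one inequality of the isometry; conversely, Lemma \ref{lem:contraction} applied to $\pi$ sends any path in $\Gamma_p(G)$ between two vertices of the image to a path in $\Gamma_p(G/N)$ of no greater length, yielding the reverse inequality. Together these give $d_{\Gamma_p(G)}(A,B)=d_{\Gamma_p(G/N)}(\bar A,\bar B)$, so the embedding is isometric.

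For part (\ref{pt:fi}), the inclusion $i:H\to G$ sends a finite-index subgroup $K\leq H$ to the same subgroup viewed inside $G$, which is still finite-index since $[G:K]=[G:H][H:K]$. This map is clearly injective on vertices, and it trivially preserves adjacency, because for $K_1,K_2\leq H$ the indices $[K_1:K_1\cap K_2]$ and $[K_2:K_1\cap K_2]$ are computed internally and do not depend on whether we regard the $K_i$ as subgroups of $H$ or of $G$. The same observation shows adjacency is reflected: two subgroups of $H$ are adjacent in $\Gamma_p(G)$ precisely when they are adjacent in $\Gamma_p(H)$. So the image is an induced subgraph and the map is a graph embedding. (Note we do not claim this embedding is isometric: a path in $\Gamma_p(G)$ between two vertices of $\Gamma_p(H)$ may pass through finite-index subgroups of $G$ that are not contained in $H$.)

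For part (\ref{pt:finormal}), $N$ is simultaneously a finite-index subgroup of $G$ and a normal subgroup, so part (\ref{pt:fi}) already gives a graph embedding $\Gamma_p(N)\to\Gamma_p(G)$ as an induced subgraph; it remains only to upgrade this to an isometry. One inequality is immediate as before, since a path in the induced subgraph is a path in $\Gamma_p(G)$ of equal length. For the reverse, suppose $K_1,K_2\leq N$ are joined in $\Gamma_p(G)$ by a path $K_1=H_0,H_1,\dots,H_n=K_2$; I want to produce a path of length at most $n$ between them lying entirely in $\Gamma_p(N)$. The natural move is to intersect each $H_j$ with $N$: set $H_j'=H_j\cap N$, so $H_0'=K_1$ and $H_n'=K_2$. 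I must check that consecutive $H_j',H_{j+1}'$ are adjacent (or equal) in $\Gamma_p(N)$, i.e. that intersecting with the normal subgroup $N$ does not destroy the $p$-power-index relation. This is where the normality of $N$ and the earlier lemmas do the work: writing $D=H_j\cap H_{j+1}$ and using Lemma \ref{lem:grouptheory} with the quotient $G\to G/N$ applied to $D\cap N\leq D\cap N$... more precisely, one compares $[H_j\cap N: H_j\cap H_{j+1}\cap N]$ with $[H_j:H_j\cap H_{j+1}]$ via Lemma \ref{lem:grouptheory} and observes that the "quotient part" $[\pi(H_j):\pi(H_j\cap H_{j+1})]$ divides $|G/N$-stuff$|$ but, crucially, divides $[H_j:H_j\cap H_{j+1}]$, which is a $p$-power, so the "$N$ part" $[H_j\cap N:H_j\cap H_{j+1}\cap N]$ is a $p$-power as well; symmetrically for $H_{j+1}$. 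Hence $H_j'$ and $H_{j+1}'$ are adjacent (or equal) in $\Gamma_p(N)$, and collapsing any repeats gives a path of length $\leq n$ in $\Gamma_p(N)$. Thus $d_{\Gamma_p(N)}(K_1,K_2)\leq d_{\Gamma_p(G)}(K_1,K_2)$, completing the isometry.

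The main obstacle is the last step of part (\ref{pt:finormal}): unlike part (\ref{pt:quotient}), where the intermediate vertices automatically contain $N$ and the bookkeeping collapses cleanly, here one must show that the "project to $N$ by intersection" operation is $1$-Lipschitz on the relevant paths, and this genuinely uses normality of $N$ together with the index identities of Lemmas \ref{lem:grouptheory} and \ref{lem:intonenormal}. Everything else is routine index arithmetic.
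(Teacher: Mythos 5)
Your proofs of parts (\ref{pt:quotient}) and (\ref{pt:fi}) are essentially the paper's: the pullback $\bar A\mapsto\pi^{-1}(\bar A)$ together with Lemma \ref{lem:grouptheory} and Lemma \ref{lem:contraction} for isometry, and the observation that indices are intrinsic for the inclusion of a finite-index subgroup. For part (\ref{pt:finormal}) your overall strategy matches the paper's (replace a path $H_0,\dots,H_n$ in $\Gamma_p(G)$ by $H_0\cap N,\dots,H_n\cap N$), but your verification of adjacency is genuinely different and in fact shorter: you apply Lemma \ref{lem:grouptheory} once per edge, to $H_j\cap H_{j+1}\leq H_j\leq G$ and the quotient $G\to G/N$, so that $[H_j\cap N:H_j\cap H_{j+1}\cap N]$ is a factor of the $p$-power $[H_j:H_j\cap H_{j+1}]$ and hence a $p$-power (and symmetrically for $H_{j+1}$), noting $H_j'\cap H_{j+1}'=H_j\cap H_{j+1}\cap N$. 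The paper instead first argues (via Lemma \ref{lem:contraction} and the fact that the path starts at a subgroup of $N$) that every $\pi(J_i)$ is a $p$-subgroup of $G/N$, i.e. $[J_i:J_i\cap N]$ is a $p$-power, and then invokes Lemma \ref{lem:intonenormal} inside each $J_i$. Your route buys a slightly stronger local statement — intersecting with $N$ is $1$-Lipschitz on any edge of $\Gamma_p(G)$, with no need for the path's endpoints to lie in $N$ — and avoids Lemma \ref{lem:intonenormal} entirely; it still uses normality of $N$ exactly where it must, namely through Lemma \ref{lem:grouptheory}, consistent with the paper's counterexample $\Alt_S\times\Alt_T\leq\Alt_{S\cup T}$ showing normality cannot be dropped. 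One phrasing quibble: the sentence about the ``quotient part'' dividing $[H_j:H_j\cap H_{j+1}]$ garbles the inference; the clean statement is that both factors in Lemma \ref{lem:grouptheory} divide the $p$-power $[H_j:H_j\cap H_{j+1}]$, so in particular the $N$-part is a $p$-power, which is what you use.
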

\begin{proof}

  For \ref{pt:quotient}, if $\pi : G\to G/N$ is a quotient
  map, then the assignment $K\mapsto \pi^{-1}(K)$ defines a 
  graph embedding $\Gamma_p(G/N) \to \Gamma_p(G)$ whose image is an
  induced subgraph. This embedding is
  isometric by Lemma \ref{lem:contraction}.

  For \ref{pt:fi},  if $H\leq G$ has finite-index, then the assignment
  $K \mapsto i(K)$ defines a graph embedding
  $\Gamma_p(H) \to \Gamma_p(G)$ whose image is an induced subgraph. 
  
  For \ref{pt:finormal}, let $N \lhd G$ be a finite-index subgroup, with assignment $\phi : K \mapsto i(K)$ defined over all subgroups $K$ in $N$.
  Let $H_1, H_2 \in \phi(\Gamma_p(N))$ and let $H_1 = J_1, \ldots, J_n
  = H_2$ be a path in $\Gamma_p(G)$ from $H_1$ to $H_2$. Then for each $i = 1, \ldots, n-1$, we have that 
  $$[J_i : J_i \cap J_{i+1}][J_{i+1} : J_i \cap J_{i+1}]$$
   is a power of $p$.
  By Lemma \ref{lem:contraction}, $\pi(J_1),\dotsc, \pi(J_n)$ is a
  path in $\Gamma_p(G/N)$. Because $J_1\leq N$, this is a path of
  $p$-subgroups of $G/N$. Therefore $[J_i : J_i \cap N]$ is a power
  of $p$  for all $i = 1, \ldots, n$.
  Thus, by Lemma \ref{lem:intonenormal} applied to $J_i \cap N$ and $J_{i+1} \cap J_i$, we have for $i = 1,\ldots, n-1$,
  $$
  [J_i : (J_i \cap N) \cap (J_i \cap J_{i+1})]
  [J_{i+1} : (J_{i+1} \cap N) \cap (J_i \cap J_{i+1})],
  $$
  is a power of $p$.
  Hence, for $i = 1,\ldots, n-1$,
  $$
  [J_{i} : N \cap J_i] [N \cap J_i : N \cap J_i \cap J_{i+1}]
  = [J_i : N \cap J_i \cap J_{i+1}]
  $$
  is a power of $p$ giving that $ [N \cap J_i : N \cap J_i \cap J_{i+1}]$ is a power of $p$, since above we showed that $[J_i : N \cap J_i]$ is a power of $p$.
By a similar argument, we get that $[N \cap J_{i+1} : N \cap J_i \cap J_{i+1}]$ is a power of $p$, and thus $N\cap J_i$ and $N \cap J_{i+1}$ are adjacent in $\Gamma_p(G)$.
It follows that the path $J_1, \ldots, J_n$ can be replaced by the path (which possibly has repeated vertices) $J_1 = J_1 \cap N, J_2 \cap N, \ldots, J_{n-1} \cap N, J_n \cap N = J_n$,
which is entirely contained in $\Gamma_p(H)$.
It follows that $\Gamma_p(H)$ is a geodesic metric space in the path metric induced from $\Gamma_p(G)$, as desired.
\end{proof}

\noindent
Note that the hypothesis of normality in \ref{pt:finormal} cannot be
removed. For example, suppose $S$ and $T$ are disjoint sets with
$|S| = |T| = 5$ and consider the non-normal subgroup
$\Alt_S\times \Alt_T \leq \Alt_{S\cup T}$. It can be shown using Lemma
\ref{lem:conncomp} below that $\Alt_S$ and $\Alt_T$ are in the same
component of $\Gamma_5(\Alt_{S\cup T})$ but in different components of
$\Gamma_5(\Alt_S\times \Alt_T)$.

Our next lemma will lead us to proving our first result concerning free groups.

\begin{lemma} \label{lem:samecomp}
Let $A$ be a vertex in $\Gamma_p(G)$.
Suppose $B$ shares an edge with $A$. If $q^k$ divides $[G:A]$
for some prime $q\neq p$ then $q^k$ divides $[G:B]$.
\end{lemma}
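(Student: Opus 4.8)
The plan is to use the multiplicativity of index together with the defining property of edges in $\Gamma_p(G)$, and to play off the fact that $[G:B]$ and $[G:A]$ differ only in their $p$-parts. First I would note that since $A$ and $B$ share an edge, both $[A : A\cap B]$ and $[B : A\cap B]$ are powers of $p$. Writing $[G : A\cap B] = [G:A][A:A\cap B]$, we see that the prime-to-$p$ part of $[G:A\cap B]$ equals the prime-to-$p$ part of $[G:A]$; in particular $q^k \mid [G:A]$ implies $q^k \mid [G:A\cap B]$ since $q \neq p$. Symmetrically, $[G : A\cap B] = [G:B][B:A\cap B]$, so the prime-to-$p$ part of $[G:A\cap B]$ also equals the prime-to-$p$ part of $[G:B]$.

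Combining these two observations, the $q$-adic valuation of $[G:A]$ equals the $q$-adic valuation of $[G:A\cap B]$, which in turn equals the $q$-adic valuation of $[G:B]$; hence $q^k \mid [G:A]$ forces $q^k \mid [G:B]$, which is exactly the claim. In fact this argument shows the stronger statement that for every prime $q \neq p$ the full $q$-part of $[G:A]$ and $[G:B]$ coincide.

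I do not expect any real obstacle here: the only ingredients are the tower law for indices (a special case of Lemma \ref{lem:grouptheory} with $N$ trivial) and the elementary fact that a power of $p$ contributes nothing to the $q$-adic valuation when $q\neq p$. The one point to state carefully is that all the indices involved are finite, which holds because $A$, $B$, and therefore $A\cap B$ are all finite-index subgroups of $G$ by definition of the vertex set of $\Gamma_p(G)$, so the factorizations into prime powers make sense.
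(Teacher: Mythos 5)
Your proposal is correct and follows essentially the same route as the paper: both use the double identity $[G:A\cap B]=[G:A][A:A\cap B]=[G:B][B:A\cap B]$ together with the fact that the edge condition makes $[A:A\cap B]$ and $[B:A\cap B]$ powers of $p$, so the $q$-parts of $[G:A]$ and $[G:B]$ agree for $q\neq p$. No gaps; your remark about finiteness of the indices is a fine, if routine, point of care.
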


\begin{proof}
In this case, we have 
$$
[G:A \cap B] = [G:A][A:A\cap B] = [G:B][B:A\cap B].
$$
Hence, if $q^k$ divides $[G:A]$, then $q^k$ must divide $[G:B]$ because $[B:A\cap B]$ is a power of $p$.
\end{proof}

\begin{proposition}
The $p$-commensurability graph of a free group has infinitely many components.
\end{proposition}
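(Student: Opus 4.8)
The plan is to exhibit infinitely many finite-index subgroups of the rank-two free group $F$, no two of which lie in the same component of $\Gamma_p(F)$. The natural source of such subgroups is their indices: by Lemma \ref{lem:samecomp}, if $A$ and $B$ share an edge and some prime power $q^k$ with $q \neq p$ divides $[F:A]$, then $q^k$ divides $[F:B]$; iterating along a path shows that for every prime $q \neq p$, the exact power of $q$ dividing the index is a \emph{non-decreasing} invariant as one moves within a component. Consequently, if two vertices lie in the same component, then for each prime $q \neq p$ the $q$-adic valuations of their indices must be comparable in the sense that one cannot strictly exceed the other while the reverse also fails — more precisely, starting from $A$ one can only reach subgroups whose index is divisible by the prime-to-$p$ part of $[F:A]$.

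First I would fix a prime $q \neq p$ (such a prime exists) and, for each $n \geq 1$, produce a finite-index subgroup $A_n \leq F$ with $[F:A_n] = q^n$. This is routine: $F$ surjects onto $\Z$, hence onto $\Z/q^n\Z$, and the preimage of $0$ is a subgroup of index exactly $q^n$. (One could equally take $A_n$ to be the kernel of a surjection $F \thrw \Z/q^n\Z$.) Now suppose $A_m$ and $A_n$ with $m < n$ were in the same component of $\Gamma_p(F)$, joined by a path $A_m = J_0, J_1, \dotsc, J_\ell = A_n$. Applying Lemma \ref{lem:samecomp} successively along the path, since $q^m \mid [F:A_m]$ we get $q^m \mid [F:J_1]$, then $q^m \mid [F:J_2]$, and so on — note the hypothesis $q \neq p$ holds throughout — so $q^m \mid [F:A_n] = q^n$, which is fine and gives no contradiction yet. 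So I would instead run the argument the other way: $q^n \mid [F:A_n]$ forces $q^n \mid [F:J_{\ell-1}] \mid \dotsb \mid [F:A_m] = q^m$, contradicting $m < n$. Hence the $A_n$ lie in pairwise distinct components, and $\Gamma_p(F)$ has infinitely many components.

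The main thing to get right is the direction in which Lemma \ref{lem:samecomp} propagates: it says divisibility by $q^k$ is preserved when moving to a \emph{neighbor}, so along a path the valuation can only go up, which means a subgroup of small $q$-valuation cannot be reached from one of large $q$-valuation. Choosing the family $\{A_n\}$ with strictly increasing $q$-valuations and invoking this monotonicity in the correct direction is the whole content; everything else is the elementary existence of index-$q^n$ subgroups in $F$, which follows from $F \thrw \Z \thrw \Z/q^n\Z$. No deeper input (no CFSG, no Guralnick) is needed here — this proposition is the ``warm-up'' to Theorem \ref{thm:freegroup}, and the only subtlety is bookkeeping the prime-to-$p$ part of the index as a component invariant.
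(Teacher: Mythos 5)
Your proposal is correct and takes essentially the same route as the paper: Lemma \ref{lem:samecomp} shows the prime-to-$p$ part of the index is an invariant of a component, the only difference being that you use subgroups of index $q^n$ for a single fixed prime $q\neq p$ (via $F\thrw \Z \thrw \Z/q^n\Z$) where the paper uses subgroups of infinitely many distinct prime indices. One small remark: since adjacency is symmetric, that invariant is in fact \emph{constant} along a component rather than merely non-decreasing, which is precisely why your backwards application of the lemma from $A_n$ along the path is legitimate and yields the contradiction.
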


\begin{proof}
Any free group has subgroups $N_1, N_2, \ldots$ with distinct prime indices $q_1, q_2, \ldots$.
By the previous lemma, any vertex that is in the connected component of $N_i$ has index divisible by $q_i$.
Thus, no path exists between $N_i$ and $N_j$ for distinct $i, j$.
\end{proof}

We finish this section by proving a general result: for any group $G$, any component of $\Gamma_p(G)$ has finite diameter.

\begin{proof}[Proof of Proposition \ref{prop:freegroupfinitediam}]
Let $G$ be any group and $\Omega$ a component of $\Gamma_p(G)$. Take
any vertex $A$ in $\Omega$ and let $N$ be the normal core of $A$. 
Let $\pi : G \to G/N$ be the quotient map. 
Let $D = \{ BN : B \in \Omega \}$. 
We claim that the diameter of $\Gamma_p(G)$ is less than $|D| + 2$.

Let $B$ be a subgroup in $\Omega$. 
Let $V_1, \ldots , V_m$ be a path in $\Gamma_p(G)$ connecting $A$ to $B$.
Then by Lemma \ref{lem:grouptheory}, $\pi(V_1), \ldots, \pi(V_m)$ is a path in $\Gamma_p(G/N)$ connecting $\pi(A)$ to $\pi(B)$.
Hence
$$
\pi(V_1)N, \dotsb, \pi(V_m)N
$$
is a path connecting $A$ to $BN$, and so $BN$ is an element of $\Omega$.
Further, if $[G:B] = n p^{r}$ where $gcd(n, p^r) = 1$, then $[G:BN] = n p^{e}$ by Lemma \ref{lem:samecomp}.
Since $B \leq BN$ and $[G: BN][BN: B] = [G:B]$, we get
$$
n p^e [BN:B] = n p^k
$$
and therefore $[BN:B] = p^{k-e}$.
Hence $BN$ and $B$ are adjacent in $\Gamma_p(G)$. 
It follows that there is an edge from any element in $\Omega$ to one
in $D$, and so the diameter of $\Omega$ is bounded above by the
diameter of the subgraph induced by $D$ plus 2. This gives the desired
bound $|D| + 2$.

If $\Omega$ contains a normal subgroup as a vertex then we can pick $A=N$
in the above argument. Therefore $D$ is the set of $p$-subgroups of
$G/N$. Any two such subgroups are connected by an edge, so the
diameter of $\Omega$ is bounded above by $3$.
\end{proof}

\section{Nilpotent groups: The Proof of Theorem \ref{thm:nilpotent}}

\label{sec:nilpotent}

We will prove Theorem \ref{thm:nilpotent} in two steps, as
Propositions \ref{prop:2implies1} and \ref{prop:1implies2} below. 
For a finite nilpotent group $G$ let $S_p(G)$ denote the
unique Sylow $p$-subgroup of $G$. Recall that $G$ is the direct
product of its Sylow subgroups.

\begin{lemma} \label{lem:subgroupdecomp}
  Suppose $G = S_{p_1}(G) \times \dotsb \times S_{p_k}(G)$ for primes
  $p_1,\dotsb, p_k$. Let $\pi_i : G\to S_{p_i}(G)$ be the quotient
  map for each $i$. Then any subgroup $H\leq G$ has the form
  $H = \pi_1(H)\times \dotsb\times \pi_k(H)$.
\end{lemma}
\begin{proof}
  Choose $\ell_1,\dotsc,\ell_k$ so that $g^{p_i^{\ell_i}} = 1$ for all
  $g\in S_{p_i}(G)$. Choose $N$ so that
  \[
  N p_1^{\ell_1} \dotsb p_{k-1}^{\ell_{k-1}} \equiv 1
  \pmod{p_k^{\ell_k}}.
  \]
  Take any $h\in H$ and write $h=(h_1,\dotsc,h_k)$ for
  $h_i\in S_{p_i}(G)$ for all $i$. Then 
  \[
  h^{N p_1^{\ell_1}\dotsb p_{k-1}^{\ell_{k-1}}} = (1, \dotsc, 1, h_k).
  \]
  Therefore $(1,\dotsc, 1, h_k)\in H$, and so we may identify $\pi_k(H)$
  with a subgroup of $H$. Applying this argument to each other factor, 
  the result follows.
\end{proof}

\begin{proposition} \label{prop:2implies1}
  If $G$ is a finitely generated group such that every finite
  quotient of $G$ is nilpotent, then every component of $\Gamma_p(G)$
  is complete for all $p$.
\end{proposition}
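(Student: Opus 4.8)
The plan is to reduce a statement about the infinite group $G$ to a statement about finite nilpotent quotients, then exploit the direct-product decomposition of a finite nilpotent group into its Sylow subgroups. First I would fix a prime $p$ and two vertices $A,B$ of $\Gamma_p(G)$ lying in a common component; I want to show $A$ and $B$ are joined by an edge, i.e. that $[A:A\cap B]$ and $[B:A\cap B]$ are both powers of $p$. Let $N$ be the intersection of the normal cores of all vertices appearing on a fixed path from $A$ to $B$; since $G$ is finitely generated and $N$ has finite index, $Q := G/N$ is a finite quotient of $G$, hence nilpotent by hypothesis. By Lemma \ref{lem:embeddings}\ref{pt:quotient} (applied in the form of Lemma \ref{lem:contraction}), the images $\pi(A)$ and $\pi(B)$ under $\pi:G\to Q$ lie in the same component of $\Gamma_p(Q)$, and because each path vertex contains $N$, we have $[A:A\cap B]=[\pi(A):\pi(A)\cap\pi(B)]$ and similarly for $B$ via Lemma \ref{lem:grouptheory}. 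So it suffices to prove the proposition for the finite nilpotent group $Q$.

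Now write $Q = S_{p_1}(Q)\times\dotsb\times S_{p_k}(Q)$ with $p_1,\dotsc,p_k$ distinct primes, and let $p_j = p$ if $p$ divides $|Q|$ (otherwise the component of $\pi(A)$ is a single vertex and there is nothing to prove). By Lemma \ref{lem:subgroupdecomp}, every subgroup $H\le Q$ decomposes as $H=\prod_i\pi_i(H)$, so for two subgroups $H,K$ we get $[H:H\cap K]=\prod_i[\pi_i(H):\pi_i(H)\cap\pi_i(K)]$, and this is a power of $p$ exactly when $\pi_i(H)=\pi_i(K)$ for every $i\neq j$. The key step is then to show that along any edge of $\Gamma_p(Q)$ the coordinates away from the $p$-factor are preserved: if $[H:H\cap K][K:H\cap K]$ is a power of $p$, then for each $i\neq j$ the index $[\pi_i(H):\pi_i(H)\cap\pi_i(K)]$ divides a power of $p$ and is also a power of $p_i$, hence equals $1$, forcing $\pi_i(H)=\pi_i(H)\cap\pi_i(K)$ and symmetrically $\pi_i(K)=\pi_i(H)\cap\pi_i(K)$, so $\pi_i(H)=\pi_i(K)$. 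Consequently the invariant $(\pi_i(H))_{i\neq j}$ is constant on each component of $\Gamma_p(Q)$; any two vertices $H,K$ in the same component share all these coordinates, and therefore $[H:H\cap K][K:H\cap K]$ is a power of $p$, i.e. $H$ and $K$ are adjacent. Thus each component of $\Gamma_p(Q)$ is complete, and pulling back along $\pi$ finishes the proof for $G$.

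I expect the main obstacle to be bookkeeping at the reduction step rather than any deep difficulty: one must be careful that $N$ can be chosen finite-index (this is where finite generation of $G$ is used, to ensure a finite path has only finitely many vertices and that their cores intersect in a finite-index subgroup — actually only finitely many cores are involved, so finite generation guarantees each core, hence the intersection, has finite index) and that the index identities of Lemma \ref{lem:grouptheory} genuinely let one transfer adjacency both ways between $G$ and $Q$. The Sylow-coordinate argument itself is clean once Lemma \ref{lem:subgroupdecomp} is in hand. Note this proof also transparently shows the stronger fact that a component of $\Gamma_p(G)$, for such $G$, is determined by the "prime-to-$p$ part" of the subgroups it contains.
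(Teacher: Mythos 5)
Your argument is correct and is essentially the paper's own proof: pass to a finite nilpotent quotient $Q=G/N$ with $N$ contained in every vertex of a path from $A$ to $B$, decompose subgroups of $Q$ into Sylow coordinates via Lemma \ref{lem:subgroupdecomp}, note that all coordinates away from $p$ are forced to agree along each edge, and transfer adjacency back to $G$ using Lemma \ref{lem:grouptheory}. (One minor inaccuracy in your commentary only: the normal core of a finite-index subgroup has finite index in \emph{any} group, so finite generation is not what makes $N$ finite index.)
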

\begin{proof}
  Suppose $A$ and $B$ are subgroups of $G$ in the same component of
  $\Gamma_p(G)$ for some prime $p$ and take any path
  $A = P_0,P_1,\dotsc, P_n = B$ from $A$ to $B$. Let $N$ be a normal,
  finite-index subgroup of $G$ contained in $P_i$ for every $i$. Then
  $G/N$ is a nilpotent group and $\pi(P_0), \pi(P_1),\dotsc, \pi(P_n)$
  is a path in $\Gamma_p(G/N)$, where $\pi: G\to G/N$ is the quotient
  map.

  Let $\mathcal{P}$ be a finite set of primes so that
  $G/N = \prod_{q\in \mathcal{P}} S_q(G/N)$. By Lemma
  \ref{lem:subgroupdecomp} we have decompositions
  $\pi(P_i) = \prod_{q\in\mathcal{P}} S_q(\pi(P_i))$ for each $i$. It
  is straightforward to see that
  \[
  \pi(P_i)\cap \pi(P_{i+1}) = \prod_{q\in\mathcal{P}}
  S_q( \pi(P_i) )\cap S_q( \pi(P_{i+1}) )
  \]
  for any $i$, and so for $j=i$ or $j=i+1$ we have
  \[
  [\pi(P_j) : \pi(P_i) \cap \pi(P_{i+1}) ] = \prod_{q\in\mathcal{P}}
  [S_q( \pi(P_j) ) : S_q( \pi(P_i) )\cap S_q( \pi(P_{i+1}) ) ].
  \]
  Since $\pi(P_i)$ and $\pi(P_{i+1})$ are adjacent in the $p$-local
  commensurability graph of $G/N$, it follows that
  $S_q( \pi(P_i) ) = S_q( \pi(P_{i+1}) )$ for all $i$ and all $q\neq
  p$. Therefore $S_q( \pi(A) ) = S_q( \pi(B) )$ for all $q\neq p$, and
  so $[\pi(A) : \pi(A) \cap \pi(B) ][ \pi(B) : \pi(A) \cap \pi(B) ]$ is
  a power of $p$. Because $[K:L] = [\pi(K) : \pi(L)]$ for any subgroups
  $L\leq K \leq G$ containing $N$, this shows that $A$ and $B$ are
  adjacent in $\Gamma_p(G)$. 
\end{proof}

\begin{lemma} \label{lem:solvablenc}
  If $Q$ is a finite solvable group that is not nilpotent then there
  is some prime $p$ so that a connected component of $\Gamma_p(Q)$ is not complete.
\end{lemma}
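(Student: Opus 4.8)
The plan is to use Hall subgroup theory. Since $Q$ is solvable it has Hall $\pi$-subgroups for every set of primes $\pi$, they are all conjugate, and in a non-nilpotent $Q$ there is a prime $p$ for which the Hall $p'$-subgroups are numerous enough that two of them fail to be adjacent to each other even though both are adjacent to $Q$ itself; that already exhibits a non-complete component of $\Gamma_p(Q)$.

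The first step is to locate the right prime. I claim there is a prime $p\mid |Q|$ such that $Q$ has no \emph{normal} Hall $p'$-subgroup. If instead $Q$ had a normal Hall $p_i'$-subgroup $K_i$ for every prime $p_i$ dividing $|Q|$, then each quotient $Q/K_i$ would be a $p_i$-group, the orders $|K_i|$ would have trivial greatest common divisor (the full power of $p_j$ divides $|K_i|$ whenever $i\neq j$), so $\bigcap_i K_i = 1$, and the diagonal map would embed $Q$ into $\prod_i Q/K_i$, a direct product of $p_i$-groups over distinct primes, hence nilpotent --- forcing $Q$ nilpotent, contrary to hypothesis. (Equivalently one invokes the standard characterization that a finite group is nilpotent iff every Sylow subgroup, equivalently every Hall $p'$-subgroup, is normal.)

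With $p$ fixed as above, the second step is a short index computation. By Hall's theorems the Hall $p'$-subgroups of $Q$ form a single conjugacy class, and since none is normal there are two distinct ones $H_1\neq H_2$. Each $H_i$ has index $[Q:H_i]$ equal to the $p$-part of $|Q|$, which is a power of $p$, so $H_i$ is adjacent to $Q$ in $\Gamma_p(Q)$; in particular $H_1$ and $H_2$ lie in the connected component of $Q$. On the other hand $[H_1 : H_1\cap H_2]$ divides $|H_1|$, which is coprime to $p$, so $[H_1 : H_1\cap H_2]$ is a power of $p$ only if $H_1 = H_2$; since $H_1\neq H_2$, the product $[H_1 : H_1\cap H_2][H_2 : H_1\cap H_2]$ is not a power of $p$, so $H_1$ and $H_2$ are not adjacent. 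Hence the component of $\Gamma_p(Q)$ containing $Q$ is not complete.

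The only real content is the first step --- deciding which prime to use --- and it reduces to the characterization of finite nilpotent groups by normality of their Sylow (equivalently Hall $p'$) subgroups; the rest is a one-line order argument, and no appeal to the structure theory of solvable groups beyond the existence and conjugacy of Hall subgroups is needed. It is worth emphasizing that $p$ need not be the prime of a non-normal Sylow subgroup: for $\Sym_3$ the witnessing prime is $p=3$, with the non-complete component being the star on $\Sym_3$ together with its three Sylow $2$-subgroups, even though the Sylow $3$-subgroup $A_3 \lhd \Sym_3$ is normal.
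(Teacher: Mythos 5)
Your proof is correct and is essentially the paper's argument: both pick a prime $p$ for which the Hall $p'$-subgroups (Hall $p$-complements) of the solvable group $Q$ fail to be normal, and then observe that two conjugate such subgroups are each adjacent to $Q$ in $\Gamma_p(Q)$ but not to each other. You merely spell out two steps the paper leaves implicit, namely why non-nilpotency yields such a prime and why the coprimality of $|H_i|$ to $p$ rules out adjacency.
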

\begin{proof}
  Let $\Pi$ be the set of prime divisors of the order of the finite
  solvable group $Q$. For any prime $q\in \Pi$ there is a Hall
  subgroup $H_q$ so that $[Q : H_q] = q^k$ for some $k$ and $q$ does
  not divide the order of $H_q$. Because $Q$ is not nilpotent, there
  is some prime $p$ and a Hall subgroup $H_p$ so that $g^{-1} H_p g
  \neq H_p$ for some $g\in Q$. Then both $H_p$ and $g^{-1} H_p g$ are
  adjacent to $Q$ in $\Gamma_p(Q)$, but there is no edge between $H_p$
  and $g^{-1} H_p g$ in $\Gamma_p(Q)$.
\end{proof}

\begin{lemma} \label{lem:simplecontainssolvable}
  If $Q$ is a non-abelian finite simple group then $Q$ contains a
  non-nilpotent solvable subgroup.
\end{lemma}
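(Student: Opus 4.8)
The plan is to reduce the statement to a single classical fact about finite groups: a finite \emph{minimal non-nilpotent group} --- one that is not nilpotent but all of whose proper subgroups are nilpotent --- is necessarily solvable. (This is a theorem of O.~J.~Schmidt; in fact such a group is a $\{p,q\}$-group which is a semidirect product of a normal Sylow subgroup by a cyclic Sylow subgroup.) Granting this, the argument is short. Among all non-nilpotent subgroups of $Q$, choose one, say $H$, of minimal order. By minimality of $|H|$, every proper subgroup of $H$ is nilpotent, so $H$ is a minimal non-nilpotent group; hence $H$ is solvable, and it is the desired non-nilpotent solvable subgroup of $Q$.

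To run this we only need that $Q$ has at least one non-nilpotent subgroup, and $Q$ itself will do. Indeed, a finite nilpotent group is the direct product of its Sylow subgroups, so a nilpotent finite simple group has a unique nontrivial Sylow subgroup and is therefore a nontrivial $p$-group; but a nontrivial $p$-group has nontrivial center, so a simple one is cyclic of order $p$, contradicting that $Q$ is non-abelian. Thus $Q$ is non-nilpotent, the subgroup $H$ above exists, and moreover (since $Q$ is not solvable) $H$ is automatically a \emph{proper} subgroup of $Q$ --- though that refinement is not needed for the statement.

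For completeness I note an alternative, less efficient route through the classification of finite simple groups: the alternating groups $\Alt_n$ with $n\geq 5$ contain the solvable non-nilpotent group $\Alt_4$; a simple group of Lie type contains a Borel subgroup, which is solvable and, for rank reasons, non-nilpotent; and the remaining finitely many groups, together with the small exceptional isomorphisms, can be dispatched by hand (for instance $\Alt_5$ and $\PSL_2(7)$ both contain $\Sym_3$). I prefer the first route, since it is self-contained modulo Schmidt's theorem and does not invoke the classification (which is used elsewhere in the paper).

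The only real content here is the appeal to Schmidt's structure theorem for minimal non-nilpotent groups; everything else is formal. I therefore do not expect a serious obstacle. If one wished to avoid citing Schmidt's theorem, the bulk of the work would be reproving it, i.e.\ carrying out the elementary but slightly delicate analysis of a finite group two of whose distinct maximal subgroups meet in a subgroup of maximal possible order.
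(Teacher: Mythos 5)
Your proof is correct, but it takes a genuinely different route from the paper. The paper first invokes the result that every non-abelian finite simple group contains a \emph{minimal simple} group, then runs through Thompson's classification of minimal simple groups ($\PSL_2(q)$, $Sz(q)$, $\PSL_3(3)$, $M_{11}$, $^2F_4(2)'$, $\Alt_7$) and exhibits an explicit solvable non-nilpotent subgroup in each case --- an argument that ultimately rests on the classification of finite simple groups and requires a nontrivial case analysis (e.g.\ the $\left<A,B\right>$ construction in $\SL_2(q)$ and facts about maximal subgroups of $Sz(q)$, $M_{11}$, $^2F_4(2)'$). You instead note that $Q$ itself is non-nilpotent (a nilpotent simple group would be a $p$-group, hence cyclic of prime order), take a non-nilpotent subgroup $H\leq Q$ of minimal order, observe that $H$ is then minimal non-nilpotent, and conclude solvability of $H$ from Schmidt's 1924 theorem that minimal non-nilpotent groups are solvable. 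Both the reduction and the auxiliary facts you use are correct, so your argument is complete modulo the citation of Schmidt's theorem. What your route buys is economy and independence from the classification: it is shorter, elementary, and needs only one classical result. What the paper's route buys is explicitness --- concrete solvable non-nilpotent subgroups inside named minimal simple groups --- though that extra information is not used elsewhere, and the paper is already content to rely on CFSG in other sections, so neither consideration makes your approach inferior for the purpose of this lemma.
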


\begin{proof} 
By Theorem 1 in  \cite{MR1485492} any non-abelian finite simple group contains a \emph{minimal simple group}, a non-abelian simple group all of whose proper subgroups are solvable.
By the Main Theorem of \cite{MR0369512}, minimal simple groups come from the following list: 
$$
PSL_2(q),\ Sz(q),\ PSL_3(3),\ M_{11},\ ^2F_4(2), \text{ and } \Alt_7.
$$

We show that each of these subgroups contains a solvable and non-nilpotent subgroup:

\begin{enumerate}
\item The groups $PSL_2(2) \cong SL_2(2) \cong \Sym_3 $ and $PSL_2(3) \cong \Alt_4$ are non-nilpotent and solvable.
Suppose now that $q -1 > 2$. As the group of units in a finite field of order $q$ form a cyclic subgroup of order $q-1$, there exists $a$ such that $a^2 \neq 1$. Hence, there exists $a,b$ in any finite field of order $q$ such that $ab = 1$ and $a \neq b$.
The group in $SL_2(q)$ generated by
$$
A:=\begin{pmatrix}
a & 0\\
0 & b
\end{pmatrix} \text{ and }
B:=\begin{pmatrix}
1 & 1\\
0 & 1
\end{pmatrix},
$$
is solvable. Moreover, the order of $A$ divides $(q-1)$ and the order of $B$ divides $q$. Moreover, we have 
$$
A B A^{-1} B^{-1} = 
\begin{pmatrix}
1 & a/b-1 \\
 0 & 1
\end{pmatrix},
$$
which is not central (the only central elements in $SL_2(q)$ are diagonal matrices).
Hence, $\left< A, B \right>$ has non-nilpotent image in $PSL_2(q)$, as elements of coprime order in a finite nilpotent group must commute.

\item Any Suzuki group by \cite[4.]{MR0120283} has a cyclic subgroup that is maximal nilpotent and of index 4 in its normalizer. Because every group of order 4 is nilpotent, this normalizer is solvable. Since the cyclic subgroup is maximal nilpotent, this normalizer cannot be nilpotent.

\item $PSL_3(3) \cong SL_3(3)$ contains $SL_2(3)$, which is solvable and not nilpotent.

\item $M_{11}$ contains $\Sym_5$ as a maximal subgroup by \cite{MR604629} and hence contains $\Sym_3$.

\item $^2F_4(2)$ contains $\PSL_2(25)$ as a maximal subgroup by
  \cite{MR604629} and hence contains a subgroup that is solvable and
  not nilpotent by the above.

\item $\Alt_7$ contains $\Alt_4$.
\end{enumerate}
\end{proof}

\begin{proposition} \label{prop:1implies2}
  Suppose $G$ is a finitely generated group with a finite-index,
  normal subgroup $N$ such that $G/N$ is not nilpotent. Then there is
  some $p$ so that a component of $\Gamma_p(G)$ is not complete.
\end{proposition}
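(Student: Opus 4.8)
The plan is to reduce to the finite group $Q = G/N$ and then exploit the embedding of $\Gamma_p(Q)$ into $\Gamma_p(G)$. By Lemma \ref{lem:embeddings}\ref{pt:quotient}, the quotient map $\pi : G \to Q$ induces an isometric embedding $\Gamma_p(Q) \hookrightarrow \Gamma_p(G)$ onto an induced subgraph, for every prime $p$. Consequently, if for some prime $p$ a component of $\Gamma_p(Q)$ fails to be complete — witnessed by vertices $A, B \leq Q$ in the same component but not adjacent — then their preimages $\pi^{-1}(A), \pi^{-1}(B)$ lie in the same component of $\Gamma_p(G)$ (a path in $\Gamma_p(Q)$ pulls back to a path) but are not adjacent (the embedding is onto an induced subgraph, so a non-edge stays a non-edge). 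Thus it suffices to find a prime $p$ for which $\Gamma_p(Q)$ has a non-complete component, where $Q$ is an arbitrary finite non-nilpotent group.

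The heart of the matter is therefore the purely finite-group statement: \emph{every finite non-nilpotent group $Q$ has, for some prime $p$, a non-complete component of $\Gamma_p(Q)$.} I would prove this by induction on $|Q|$, splitting into the solvable and non-solvable cases. If $Q$ is solvable (and non-nilpotent), Lemma \ref{lem:solvablenc} immediately provides such a $p$: two conjugate non-normal Hall $p'$-complements $H_p$ and $g^{-1}H_p g$ are each adjacent to $Q$ in $\Gamma_p(Q)$ but not to each other. If $Q$ is not solvable, take a composition series; some composition factor is a non-abelian finite simple group $S$, which appears as a subquotient $K/L$ of $Q$ with $L \lhd K \leq Q$. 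By Lemma \ref{lem:simplecontainssolvable}, $S$ contains a non-nilpotent solvable subgroup; pulling this back gives a subgroup $K' \leq K \leq Q$ with $K'/L$ solvable and non-nilpotent, hence $K'$ is solvable and non-nilpotent. Now apply Lemma \ref{lem:solvablenc} to $K'$ to get a prime $p$ and vertices in the same non-complete component of $\Gamma_p(K')$, and then use Lemma \ref{lem:embeddings}\ref{pt:fi} — the inclusion $K' \leq Q$ induces an embedding $\Gamma_p(K') \to \Gamma_p(Q)$ onto an induced subgraph — to transport this to a non-complete component of $\Gamma_p(Q)$. (One must check that same-component and non-adjacency are both preserved; same-component because the embedding sends paths to paths, non-adjacency because the image is an induced subgraph.)

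The main obstacle is the bookkeeping around which structural inputs are actually needed and making sure the graph embeddings preserve exactly the right features. Lemma \ref{lem:embeddings}\ref{pt:fi} only asserts that $\Gamma_p(H)$ embeds as an \emph{induced} subgraph, not isometrically, so I cannot conclude that the two vertices remain in the same \emph{component} of $\Gamma_p(Q)$ merely from their being in the same component of $\Gamma_p(K')$ — but I do not need to: a path witnessing same-component in $\Gamma_p(K')$ maps to a genuine walk in $\Gamma_p(Q)$, which is all that is required, and the induced-subgraph property kills the edge. The same care applies when pulling a solvable non-nilpotent subgroup of $S=K/L$ back through $\pi_L : K \to K/L$ to $K' \leq K$: one needs that $K'/L$ non-nilpotent forces $K'$ non-nilpotent (a quotient of a nilpotent group is nilpotent, so this is clear) and that $K'$ solvable follows from $L$ and $K'/L$ being solvable. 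Once these routine verifications are in place, the argument closes, and combined with Proposition \ref{prop:2implies1} it completes the proof of Theorem \ref{thm:nilpotent}.
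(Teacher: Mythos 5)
Your overall route is the same as the paper's: handle solvable $Q=G/N$ with Lemma \ref{lem:solvablenc} and transport the non-complete component to $\Gamma_p(G)$ via Lemma \ref{lem:embeddings}, and in the non-solvable case locate a nonabelian simple subquotient, invoke Lemma \ref{lem:simplecontainssolvable}, and transport again. The reduction step and your remarks about why same-component and non-adjacency survive the (induced-subgraph, not necessarily isometric) embeddings are correct and are exactly the points the paper relies on. (The announced ``induction on $|Q|$'' is never actually used; that is harmless.)

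There is, however, one genuine unjustified step in your non-solvable case: after writing the nonabelian composition factor as $S=K/L$ and pulling back a solvable non-nilpotent subgroup of $S$ to $K'\leq K$, you assert that $K'$ is solvable ``since $L$ and $K'/L$ are solvable.'' The implication is fine, but the premise that $L$ is solvable is not available for an arbitrary choice of nonabelian composition factor: in $Q=\Alt_5\times\Alt_5$ with the series $Q\rhd \Alt_5\times 1\rhd 1$, choosing the top factor gives $L=\Alt_5\times 1$, so $K'$ contains a copy of $\Alt_5$ and is not solvable, and Lemma \ref{lem:solvablenc} does not apply to it. The gap is easy to close in either of two ways: (i) choose the \emph{lowest} nonabelian composition factor, so that every factor below it is abelian and hence $L$ is solvable; or (ii) do as the paper does and never pull back at all — apply Lemma \ref{lem:solvablenc} to the solvable non-nilpotent subgroup $S'\leq K/L$ itself, and then transport the non-complete component through the chain $\Gamma_p(S')\to\Gamma_p(K/L)\to\Gamma_p(K)\to\Gamma_p(Q)\to\Gamma_p(G)$ using parts \ref{pt:fi} and \ref{pt:quotient} of Lemma \ref{lem:embeddings}, which requires no solvability of $L$. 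With either repair your argument coincides with the paper's proof.
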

\begin{proof}
  Take $G$ and $N$ as above, let $Q = G/N$ and let $\pi:G\to Q$ be the
  quotient map. If $Q$ is solvable, then by Lemma \ref{lem:solvablenc}
  there is a prime $p$ and subgroups $A,B\leq Q$ in the same component
  of $\Gamma_p(Q)$ that are not adjacent. Then $\pi^{-1}(A)$ and
  $\pi^{-1}(B)$ are non-adjacent vertices in the same component of
  $\Gamma_p(G)$ by Lemma \ref{lem:embeddings}, so $\Gamma_p(G)$ is not
  complete.

  Now consider the case that $Q$ is not solvable. Let
  $Q = N_0 \geq N_1 \geq \dotsb \geq N_{k-1} \geq N_k = \{1\}$ be any
  chain of subgroups such that $N_{i+1}$ is a maximal normal subgroup
  of $N_i$ for all $i$. Because $Q$ is not solvable, there is some $j$
  so that $N_j/N_{j+1}$ is not abelian. Then by Lemma
  \ref{lem:simplecontainssolvable} there is a non-nilpotent solvable
  subgroup $S\leq N_j/N_{j+1}$. By Lemma \ref{lem:solvablenc} there is some
  prime $p$ with a component $\Omega$ of $\Gamma_p(S)$ that is not
  complete. By Lemma \ref{lem:embeddings} the component $\Omega$ fully
  embeds in a component of $\Gamma_p(G)$, which is therefore not
  complete.
\end{proof}

\section{Free groups: The Proof of Theorem \ref{thm:freegroup}}

\label{sec:freegroup}

Let $F$ be the free group of rank two.
Let $p$ be a prime and $N \in \mathbb{N}$ be given. 
By Lemma \ref{lem:embeddings}, to prove Theorem \ref{thm:freegroup} it suffices to find a finite quotient $Q$ of $F$ with subgroups $A, B \leq Q$ such that the length of any geodesic in $\Gamma_p(Q)$ connecting $A$ to $B$ is greater than $N$.
Our candidate for $Q$ is $\Alt_X$, the alternating group on a set $X$ of more than
$p^k > N$ elements, and our candidates for $A$ and $B$ are conjugates of $\Alt_S$ for a subset $S\subseteq X$ with $p^k$ elements.

We first need a couple technical group theoretic results. 
First, we give a description of a connected component in $\Gamma_p(\Alt_X)$. This requires a simple lemma.

\begin{lemma} \label{lem:altoverlap}
If $T_1 \cap T_2$ has more than one element and $|T_1|, |T_2| \geq 4$, then $\left< \Alt_{T_1}, \Alt_{T_2} \right> = \Alt_{T_1 \cup T_2}$.
\end{lemma}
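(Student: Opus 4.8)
The plan is to show that the subgroup $H := \langle \Alt_{T_1}, \Alt_{T_2}\rangle$ acts on $X := T_1 \cup T_2$ in a way that forces $H$ to contain all of $\Alt_X$. First I would observe that $H$ acts transitively on $X$: since $|T_1 \cap T_2| \geq 2$, any point of $T_1$ can be moved into $T_1 \cap T_2$ by an element of $\Alt_{T_1}$ (using $|T_1| \geq 3$, so $\Alt_{T_1}$ is transitive on $T_1$), and likewise any point of $T_2$ can be moved into $T_1 \cap T_2$ by $\Alt_{T_2}$; composing such moves connects any two points of $X$. Points outside $T_1 \cup T_2$ are fixed, but they are not in $X$, so this gives transitivity of $H$ on $X$.

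Next I would upgrade transitivity to primitivity, or directly to $2$-transitivity. The cleanest route: $H$ contains $\Alt_{T_1}$, which already acts as the full alternating group on the block $T_1$ of size $\geq 4$, hence contains a $3$-cycle. A classical theorem (Jordan) states that a primitive permutation group on a finite set containing a $3$-cycle must contain the full alternating group. So it suffices to establish that $H$ is primitive on $X$. To do this, suppose $\mathcal{B}$ is a nontrivial block system. Pick the block $B$ containing a point $x \in T_1 \cap T_2$. Because $\Alt_{T_1}$ is $2$-transitive on $T_1$ and stabilizes the block system it preserves setwise... — more carefully: $\Alt_{T_1} \leq H$ permutes the blocks, and since $\Alt_{T_1}$ is primitive on $T_1$ (as $|T_1| \geq 4$), the blocks of $\mathcal{B}$ restricted to $T_1$ are either all singletons or all of $T_1$; similarly for $T_2$. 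If the blocks restrict to singletons on both $T_1$ and $T_2$ then, since $X = T_1 \cup T_2$, every block is a singleton, contradiction. If the block $B \ni x$ contains all of $T_1$, then since $x \in T_2$ as well, $B$ also contains all of $T_2$ (by the $T_2$ analysis, $B \cap T_2$ is either $\{x\}$ or $T_2$, and it contains the $\geq 2$ points of $T_1 \cap T_2$, so it must be $T_2$); hence $B = X$, again contradicting nontriviality. Therefore $H$ is primitive.

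Having primitivity plus a $3$-cycle, Jordan's theorem yields $\Alt_X \leq H$. The reverse inclusion $H \leq \Alt_X$ is immediate since both $\Alt_{T_1}$ and $\Alt_{T_2}$ consist of even permutations of $X$. This gives $H = \Alt_X$. The only genuine subtlety is the hypothesis bookkeeping: one must use $|T_1 \cap T_2| \geq 2$ (not just nonempty) precisely at the step where a block meeting $T_1 \cap T_2$ in $\geq 2$ points cannot be a singleton, and one must use $|T_i| \geq 4$ to invoke primitivity of $\Alt_{T_i}$ on $T_i$ (the case $|T_i| = 3$ would give $\Alt_{T_i} \cong C_3$, which is still primitive, so in fact $|T_i| \geq 3$ suffices for that part — the genuine need for $\geq 4$ is to guarantee a $3$-cycle lives in $\Alt_{T_i}$ and that $\Alt_{T_i}$ is large enough, but the statement is phrased with $\geq 4$, which is more than enough). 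The main obstacle is simply assembling the primitivity argument cleanly; everything else is routine. An alternative to invoking Jordan's theorem would be an explicit generation argument showing $H$ contains every $3$-cycle on $X$: transitivity plus the known $3$-cycles in $\Alt_{T_1}$, conjugated around by $H$, produce all $3$-cycles, which generate $\Alt_X$ — I would fall back on this if a self-contained argument is preferred over citing Jordan.
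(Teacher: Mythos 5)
Your argument is correct, but it takes a genuinely different route from the paper. The paper proves the lemma by induction on $|T_1 \cup T_2|$: the base case $|T_1| = |T_2| = 4$ with $|T_1 \cap T_2| \in \{2,3\}$ is verified by machine computation in GAP, and the inductive step deletes points of the symmetric difference and then produces every $3$-cycle on $T_1 \cup T_2$ by hand from the $4$-point base case. You instead argue directly with permutation group theory: $\left< \Alt_{T_1}, \Alt_{T_2} \right>$ is transitive on $T_1 \cup T_2$, it is primitive because a nontrivial block system would restrict to a block system on each $T_i$ (where $\Alt_{T_i}$ is primitive) and the $\geq 2$-point intersection rules out both trivial possibilities, and Jordan's theorem (a primitive group containing a $3$-cycle contains the full alternating group) then finishes, the reverse inclusion being obvious. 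This buys a cleaner, computation-free proof that in fact works already for $|T_i| \geq 3$, at the cost of invoking a classical black box where the paper is self-contained apart from a finite check. One step to tighten: in the all-singletons case, the fact that each block meets each $T_i$ in at most one point does not by itself make every block a singleton, since a block could a priori consist of one point of $T_1 \setminus T_2$ together with one point of $T_2 \setminus T_1$; but the block containing a point of $T_1 \cap T_2$ really is a singleton, and transitivity forces all blocks in the system to have the same size, so your conclusion stands after this small repair.
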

\begin{proof}
We prove this by induction on $|T_1 \cup T_2|$.
The case that $T_1 = T_2$ is clear, so suppose $T_1\neq T_2$.
The base case, when $|T_1| = |T_2| = 4$ and $|T_1 \cap T_2| \in \{2, 3\}$, follows by computation (we did this in \cite{GAP4}).
For the inductive step, suppose without loss of generality that $x\in T_1\setminus T_2$. 
By inductive hypothesis $\left< \Alt_{T_1\setminus \{x\}}, \Alt_{T_2} \right> = \Alt_{T_1\cup T_2 \setminus \{x\}}$.
Arguing similarly if $T_2\setminus T_1$ is nonempty, we reduce to the case when $T_1 \cup T_2 \setminus T_1 \cap T_2$ consists of at most two points.
To finish, we claim that any 3-cycle on points in $T_1\cup T_2$ is in $\left< \Alt_{T_1} , \Alt_{T_2} \right>$.
Let $v_1, v_2, v_3$ be distinct points in $T_1 \cup T_2$.
If $\{v_1, v_2, v_3\} \subseteq T_1$ or $\{v_1, v_2, v_3\} \subseteq T_2$, then we are done.
Thus, by suitably relabeling, we may assume $v_1, v_2 \in T_1$ and $v_3 \in T_2$.
Further, since $T_1 \cup T_2 \setminus T_1 \cap T_2$ consists of at most two points, then by relabeling again, we may assume $v_2 \in T_2$.
Select $w_1, w_2 \in T_1 \cap T_2$ that are distinct from $v_1$, $v_2$, and $v_3$.
Then, by the base case applied to $\Alt_{\{ v_1, v_2, v_3, w_1\}} \leq \Alt_{T_1}$ and $\Alt_{\{ v_1, v_2, v_3, w_2 \}} \leq \Alt_{T_2}$, we obtain that $\Alt_{\{ v_1, v_2, v_3, w_1, w_3 \}}$ is contained in $\left< \Alt_{T_1} , \Alt_{T_2} \right>$, and hence the desired 3-cycle is found.
This completes the proof.
\end{proof}

For any subset $S \subseteq X$, we denote the symmetric group on $S$ by $\Sym_S$ and the alternating group on $S$ by $\Alt_S$. For a subgroup $P \leq \Sym_S$ we define the \emph{support} to be the complement of the fixed point set of the action of $P$ on $S$.

\begin{lemma} \label{lem:decomp}
Let $p$ be a prime number and $k$ an integer so that $p^k > 4$.
Let $X$ be a finite set, $S\subseteq X$, and $P\leq \Sym_X$ a $p$-group with 
support disjoint from $S$.
Let $E$ be an index $p^j$ subgroup of $\Alt_S \times P$.
If $|S| = p^k$ or $|S| = p^k -1$, then we have the decomposition $E = \Alt_T \times
P'$ for some $P' \leq P$ and some
$T \subseteq S$ with $|T| = p^k$ or $|T| = p^k-1$.
\end{lemma}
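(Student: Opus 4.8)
The plan is to show that any finite-index subgroup $E$ of $\Alt_S \times P$ has a product structure compatible with the given product, and then to pin down which subgroup of $\Alt_S$ can appear. First I would invoke Goursat-type reasoning: the projections $E_1 := \mathrm{pr}_{\Alt_S}(E)$ and $E_2 := \mathrm{pr}_P(E)$ contain $E$, and $E$ contains $K_1 := E \cap \Alt_S$ and $K_2 := E \cap P$, with $E_1/K_1 \cong E_2/K_2$ via the Goursat isomorphism. Since $|S| = p^k$ or $p^k - 1$ with $p^k > 4$, the group $\Alt_S$ is a non-abelian finite simple group, so its normal subgroup $K_1$ is either trivial or all of $\Alt_S$. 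If $K_1 = \Alt_S$ then $E \supseteq \Alt_S \times \{1\}$, and then $E = \Alt_S \times (E \cap P)$ splits as a product, with $T = S$, $P' = E \cap P$, and we are done since $|S| = p^k$ or $p^k-1$ by hypothesis.

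The remaining case is $K_1 = \{1\}$, i.e. $E \cap \Alt_S$ is trivial. Then the Goursat isomorphism forces $E_1 \cong E_2/K_2$, which is a $p$-group (it is a subquotient of $P$). So $E_1$ is a $p$-subgroup of $\Alt_S$, hence has order at most a power of $p$ dividing $|\Alt_S|$; since $|\Alt_S| = (p^k)!/2$ or $(p^k-1)!/2$, the $p$-part is $p^{(p^k-1)/(p-1)}$ or smaller, which is far less than $|\Alt_S|$. But $E$ has index $p^j$ in $\Alt_S \times P$, so $|E| \geq |\Alt_S \times P| / p^j = |E_1| \cdot |E_2| / p^j$ is divisible by $|\Alt_S|/p^j$; meanwhile $|E| = |E_1|\cdot|K_2| \leq |E_1| \cdot |P|$, which has $p$-power order in its $\Alt_S$-contribution bounded by $|E_1|$. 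Comparing the order of $\Alt_S$ against $p^j |E_1|$: since $E$ has $p$-power index, every prime $q \neq p$ dividing $|\Alt_S|$ must divide $|E|$ to the same multiplicity as it does $|\Alt_S \times P|$; but the only source of such primes in $E$ is a factor isomorphic to a subgroup of $\Alt_S$, which here has size $|E_1|$, a $p$-power. For $p^k > 4$, $|\Alt_S|$ has a prime divisor other than $p$ (e.g. any prime between $|S|/2$ and $|S|$ when $|S| \geq 5$, by Bertrand's postulate), giving a contradiction. Hence $K_1 = \{1\}$ is impossible, and only the product case survives.

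The main obstacle is making the index-counting in the second case airtight — specifically, confirming that $\Alt_S$ always contains a non-$p$ prime in its order for all $p^k > 4$ (the small cases $p^k = 5, 7, 8, 9, \dots$ should be checked, but Bertrand's postulate handles $|S| \geq 5$ uniformly, and $|S| = p^k \geq 5$ or $|S| = p^k - 1 \geq 4$; the case $|S| = 4$ is excluded since $p^k > 4$ forces $|S| \geq p^k - 1 \geq \max(4, p^k-1)$, and when $p^k = 5$ we get $|S| \in \{4,5\}$ — here $|S|=4$ gives $\Alt_4$, which is solvable, so that sub-subcase needs the weaker observation that $E$ of $p$-power index in $\Alt_4 \times P$ with $\Alt_4$'s order $= 12 = 2^2 \cdot 3$ still forces, when $p \neq 2,3$... but then index a power of $p$ coprime to $12$ forces $E \supseteq \Alt_4$, and when $p \in \{2,3\}$ one argues directly). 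I would isolate this bookkeeping into the statement "$|\Alt_S|$ is not a $p$-power when $|S| \in \{p^k, p^k-1\}$, $p^k > 4$", prove it by Bertrand plus the two explicit small cases, and then the rest of the argument is the clean Goursat dichotomy above. Finally, once $E = \Alt_S \times P'$ is established, $|T| = |S| \in \{p^k, p^k - 1\}$ holds automatically, completing the proof.
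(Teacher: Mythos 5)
There is a genuine gap, and it sits exactly where the paper needs its deepest input. In your Goursat setup, $K_1 = E \cap \Alt_S$ is normal only in the projection $E_1 = \mathrm{pr}_{\Alt_S}(E)$, not in $\Alt_S$; so simplicity of $\Alt_S$ yields the dichotomy ``$K_1=1$ or $K_1=\Alt_S$'' only once you know $E_1 = \Alt_S$, which you never establish. In fact it can fail: when $|S| = p^k$ the subgroup $E = \Alt_{S\setminus\{v\}}\times P$ has index $p^k$ in $\Alt_S\times P$, and there $K_1 = \Alt_{S\setminus\{v\}}$ is neither trivial nor all of $\Alt_S$. This example also shows that your final conclusion (``only the product case with $T=S$ survives'') is strictly stronger than the lemma and false --- the statement deliberately allows $T = S\setminus\{v\}$ with $|T| = p^k-1$, and the later Type 1 versus Type 2 distinction in Lemma \ref{lem:conncomp} depends on that case actually occurring.

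What is missing is control of the projection $E_1$, which has $p$-power index in $\Alt_S$ (by Lemma \ref{lem:grouptheory}, as in the paper's proof). The paper obtains this from Theorem 1(a) of Guralnick \cite{MR700286}: a proper subgroup of prime-power index in $\Alt_n$ exists only when $n=p^k$ and is then a point stabilizer $\Alt_{n-1}$. That classification (resting on the classification of finite simple groups) is the essential ingredient; your order-counting and Bertrand-postulate bookkeeping only rules out $E_1$ being a $p$-group, and cannot exclude the point-stabilizer case. Once $E_1 = \Alt_T$ with $T=S$ or $T=S\setminus\{v\}$ is known, your Goursat argument can be repaired: $E_1/K_1 \cong E_2/K_2$ is a $p$-group, and $\Alt_T$ has no nontrivial $p$-group quotient in the relevant range, so $K_1 = E_1$, giving $\Alt_T\times 1 \leq E$ and hence $E = \Alt_T\times(E\cap P)$. (The paper reaches the same point differently, by lifting generators of $\Alt_T$ of order coprime to $p$ and taking suitable powers.) But without Guralnick's theorem or an equivalent substitute, the proposal does not prove the lemma.
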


\begin{proof}
Let $\pi : \Alt_S \times P \to \Alt_S$ be the projection map.
By Lemma \ref{lem:grouptheory} we have
$$
[\Alt_S \times P : E] = [\Alt_S : \pi(E)][1 \times P : E \cap (1 \times P)].
$$
The left hand side of this equation is a power of $p$, so
$[\Alt_S : \pi(E)]$ is a power of $p$. Because $|S| = p^k$ or
$|S| = p^{k}-1$ by assumption, Theorem 1(a) in \cite{MR700286}
immediately implies that either $\pi(E) = \Alt_S$ or $|S| = p^k$ and
$\pi(E) = \Alt_{S \setminus \{v\}}$ for some $v \in S$. Let $T$ denote the
set such that $\pi(E) = \Alt_T$.  
Let $q$ be $3$ if $p \neq 3$ and $q$ be $2$ if $p = 3$.
For the case $p\neq 3$, recall that $\Alt_T$ is generated by 3-cycles by elementary
properties of alternating groups.
In the case $p=3$, note that $p^k >6$.
Because $\Alt_6$ is generated by an element of order $2$ and one of order
$4$, Lemma \ref{lem:altoverlap} implies that
$\Alt_T$ is generated by elements of order $2$ or $4$ in this case.
Therefore in either case it follows that $\Alt_T$ is generated by elements $g_1, \ldots, g_k$ each with order dividing a power of $q$.
Since $\pi$ maps onto $\Alt_T$, we have that for
each $i = 1, \ldots, k$, there exists $v_i \in P$ such that
$(g_i, v_i) \in E$.  Since $v_i \in P$, we have that the order of
$v_i$ is coprime with $g_i$, hence as $q \neq p$, there exists $\ell$ such that
$$
(g_i, v_i)^\ell = (g_i, 1).
$$
It follows then that $E$ contains all of $\Alt_T \times 1$, and hence $E = \Alt_T \times P'$ where $P' \leq P$, as desired.
\end{proof}

Let $\comp{S}{X}$ be the component of $\Gamma_p(\Alt_X)$ containing $\Alt_S$, and
let $\bs{S}{X}$ denote the set of subgroups in $\comp{S}{X}$ isomorphic to $\Alt_T$ for some $|T| \in \{p^k, p^k-1\}$.
For odd primes $p$, we get the following description:

\begin{lemma} \label{lem:conncomp}
Let $S \subseteq X$ be a set of cardinality $p^k$ for some odd prime $p$ such that $p^k > 4$.
Vertices of the component $\comp{S}{X}$ in $\Gamma_p(\Alt_X)$ consist of
two classes of subgroups:
\begin{enumerate}
\item[]{\bf Type 1.} subgroups of the form $\left< \Alt_T, P \right>$, where $|T| = p^k$ and $P \leq \Alt_X$, and
\item[]{\bf Type 2.} subgroups of the form $\left< \Alt_T, P \right>$, where $|T| = p^k-1$ and $P \leq \Alt_X$.
\end{enumerate}
In either case, the subgroup is $\Alt_T \times P$, where $P$ is a $p$-group with support in $T^c$.
Moreover, for all primes $p$, if $V$ is of Type 1 or Type 2, the set $T$ is uniquely determined by $V$.
\end{lemma}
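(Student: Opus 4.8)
The plan is to prove the two halves of the statement separately: first, that every vertex in $\comp{S}{X}$ has the claimed product form $\Alt_T \times P$ with $|T| \in \{p^k, p^k-1\}$ and $P$ a $p$-group supported in $T^c$; and second, that $T$ is uniquely determined by the vertex $V$. For the first half, I would argue by induction on the edge-distance from $\Alt_S$ within the component $\comp{S}{X}$. The base case is $\Alt_S$ itself, which is $\Alt_S \times \{1\}$ with $T = S$ of size $p^k$, so it is of Type 1. For the inductive step, suppose $V$ is a vertex of the claimed form, say $V = \Alt_T \times P$, and $W$ is adjacent to $V$ in $\Gamma_p(\Alt_X)$; I want to show $W$ also has the claimed form. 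Set $E = V \cap W$, so both $[V:E]$ and $[W:E]$ are powers of $p$. Since $V = \Alt_T \times P$ and $P$ is a $p$-group supported in $T^c$, Lemma \ref{lem:decomp} (applied with $S$ there taken to be our $T$, noting $|T| \in \{p^k, p^k-1\}$ and $p^k > 4$) tells us $E = \Alt_{T'} \times P'$ for some $T' \subseteq T$ with $|T'| \in \{p^k, p^k-1\}$ and $P' \leq P$.

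The crux of the inductive step is then to reconstruct $W$ from the fact that $W$ contains $E = \Alt_{T'}\times P'$ as a subgroup of $p$-power index and $W \leq \Alt_X$. Here I would use the containment $\Alt_{T'} \leq W$ together with $[W : E]$ being a prime power to control $W$: the normalizer considerations show that $W$ normalizes the subgroup generated by all conjugates under $W$ of the non-abelian simple subnormal factor $\Alt_{T'}$, and since $|T'| \geq p^k - 1 > 4$ this simple factor is large enough to pin down its support. Concretely, $W$ acts on the set of $\Alt_X$-conjugates of $\Alt_{T'}$; because $[W:E]$ is a $p$-power and $E$ stabilizes $\Alt_{T'}$, the orbit of $\Alt_{T'}$ under $W$ has $p$-power size, and one shows this orbit must be a singleton (otherwise the group generated by the orbit would be a direct product of too many copies of $\Alt_{T'}$, forcing $|X|$ to be larger, or one gets an index that is not a $p$-power — this parallels the non-normality remark after Lemma \ref{lem:embeddings}). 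Hence $W$ normalizes $\Alt_{T'}$, so $W \leq N_{\Sym_X}(\Alt_{T'}) \cap \Alt_X = (\Sym_{T'} \times \Sym_{(T')^c}) \cap \Alt_X$ (using $|T'| \geq 4$ so the only outer action is conjugation). Projecting to $\Sym_{T'}$ and using that $[W:E]$ is a $p$-power while $\Alt_{T'} \leq W$, Theorem 1(a) of \cite{MR700286} again forces the $\Sym_{T'}$-component of $W$ to be $\Alt_{T}$ for some $T$ with $T' \subseteq T$ and $|T| \in \{p^k, p^k-1\}$; an argument identical to the last paragraph of the proof of Lemma \ref{lem:decomp} (killing the $p$-part of the other coordinate using coprimality of orders) then shows $W = \Alt_T \times P''$ where $P''$ is a $p$-group supported in $T^c$. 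This closes the induction.

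For the uniqueness of $T$ (claimed for all primes $p$, including $p = 2$): given a vertex $V$ of Type 1 or Type 2, $V = \Alt_T \times P$, I claim $T$ is recoverable as follows. The subgroup $\Alt_T \times \{1\}$ is the unique minimal normal subgroup of $V$ that is non-abelian simple — indeed $P$ is a nontrivial $p$-group so any minimal normal subgroup inside it is abelian, while $\Alt_T$ is simple and non-abelian since $|T| \geq p^k - 1 > 4$ — hence $\Alt_T \times \{1\}$ is canonically determined by $V$, and then $T$ is the support of this subgroup inside $X$. This gives $T$ uniquely. The main obstacle I anticipate is the singleton-orbit argument in the inductive step: ruling out that $W$ properly permutes several conjugates of $\Alt_{T'}$ requires care, but it should follow from comparing prime factorizations of the relevant indices (a product of $m \geq 2$ copies of $\Alt_{T'}$ has index in $\Alt_X$ divisible by primes other than $p$ once $|T'|$ is close to $|X|$ in the relevant range, contradicting that $W$ is joined to $V$ by a $p$-power-index intersection), exactly in the spirit of Lemma \ref{lem:samecomp}.
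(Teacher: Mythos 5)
Your overall induction-on-adjacency structure matches the paper's, and your reduction of $E = V\cap W$ via Lemma \ref{lem:decomp} is exactly right; the gap is in the crux step, where you claim the $W$-conjugation orbit of $\Alt_{T'}$ must be a singleton, so that $W$ normalizes $\Alt_{T'}$ and sits inside $(\Sym_{T'}\times \Sym_{(T')^c})\cap \Alt_X$. This is false, and falsely excludes genuine vertices of the component: take $|T'| = p^k-1$, $V = \Alt_{T'}$ (Type 2) and $W = \Alt_{T'\cup\{x\}}$ for some $x\notin T'$ (Type 1). These are adjacent, $E = V\cap W = \Alt_{T'}$ has index $p^k$ in $W$, yet $W$ does not normalize $\Alt_{T'}$: its orbit consists of all $p^k$ conjugates $\Alt_{T''}$ with $T''\subseteq T'\cup\{x\}$, $|T''|=p^k-1$. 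Your justification for the singleton claim breaks precisely here: conjugates whose supports overlap do not generate a direct product of copies of $\Alt_{T'}$; by Lemma \ref{lem:altoverlap} they generate $\Alt$ of the union of supports, and $[\Alt_{T'\cup\{x\}}:\Alt_{T'}]=p^k$ is a $p$-power, so no index contradiction arises. The paper's proof handles exactly this situation differently: it considers the orbit $T_1,\dotsc,T_k$ of the support set under the ambient vertex, kills the case of small pairwise intersections using an element of order $2$ (impossible inside a group in which $\Alt_{T_i}$ has odd prime-power index), applies Lemma \ref{lem:altoverlap} to get $\Alt_{T_1\cup\dotsb\cup T_k}$ inside the vertex, uses Guralnick's Theorem 1(a) to force $|T_1\cup\dotsb\cup T_k| = p^k$, and only then, after replacing $T'$ by this union (which \emph{is} invariant), runs the projection argument you sketch. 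Without this ``pass to the union of the orbit'' step your induction cannot close.

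A smaller point: your uniqueness argument (recover $\Alt_T\times 1$ as the unique non-abelian minimal normal subgroup, then read off its support) is clean and works whenever $|T|\geq 5$, but the hypothesis only gives $p^k>4$, so in the edge case $p^k=5$ a Type 2 vertex has $|T|=4$ and $\Alt_4$ is not simple (it has the Klein four group as a minimal normal subgroup), so your dichotomy ``non-abelian simple vs.\ abelian'' does not directly apply; you would need a separate patch there, whereas the paper's uniqueness argument (disjoint supports give a non-$p$-power index, overlapping distinct supports contradict normality of $\Alt_{T_1}\times 1$ under the transitive action of $\Alt_{T_2}$) covers this case.
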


\begin{proof}
We first show uniqueness of $T$. This implies that Type 1 and Type 2 are disjoint classes.
Let $V$ be a vertex with distinct decompositions $\Alt_{T_i} \times P_i$ with
$|T_i| > 3$ and $p$-group $P_i$ with support in $T_i^c$ for $i = 1,2$
such that $T_1 \neq T_2$.
If $T_i \cap T_j$ is empty, then
$$
[V : \Alt_{T_1} \times \Alt_{T_2} \times 1][\Alt_{T_1} \times \Alt_{T_2}\times 1: \Alt_{T_1}\times 1] = [V: \Alt_{T_1} \times 1] = |P_1|,
$$
and thus $[\Alt_{T_1} \times \Alt_{T_2}\times 1: \Alt_{T_1}\times 1] =
|\Alt_{T_2}|$ must be a power of $p$. But this is impossible as
$|\Alt_{T_2}|$ is either $(p^k)!/2$ or $(p^k-1)!/2$ for $p^k > 4$.
Thus, $T_1$ and $T_2$ overlap. If $T_1\neq T_2$ then $\Alt_{T_1}
\times 1$ cannot be normal because $\Alt_{T_2}$ acts transitively on
$T_2$. But $\Alt_{T_1} \times 1$ is clearly normal in $\Alt_{T_1}
\times P_1$, so this is a contradiction.
Therefore $T_1 = T_2$.

Since elements in $\bs{S}{X}$ are of Type 1 or 2, it suffices to show that any $E$ that is adjacent to an element of Type 1 or 2 must itself be of Type 1 or 2.

Let $E$ be adjacent to $V = \Alt_T \times P$ where $P$ is a $p$-group
with support in $T^c$ and $|T| = p^k$ or $|T| = p^k-1$.
Then $E \cap V$ is a subgroup of $\Alt_T \times P$ of index a power of $p$.
By Lemma \ref{lem:decomp}, $E \cap V = \Alt_T \times P'$ or $E\cap V =
\Alt_{T \setminus \{ v \}}\times P'$ where $P' \leq P$ and $v \in T$.
We will therefore assume without loss of generality that $E$ contains $\Alt_T \times 1 = \Alt_T$ as a subgroup of $p$ power index.

Suppose that $E$ does not leave $T$ invariant.
Let $T_1, T_2, \cdots, T_k$ be the orbit of $E$ acting on $T$ and note
that $E$ contains $\Alt_{T_i}$ for each $i$.
Suppose $T_i \cap T_{i+1}$ has fewer than two elements for some $i$. 
The group $\Alt_{T_{i}}$ contains
$\Alt_{T_{i}\setminus T_i \cap T_{i+1}}$, which includes a permutation of
order $2$ since $|T_i| > 4$.
Hence $E$ contains $\Alt_{T_i} \times \mathbb{Z}/2\mathbb{Z} \geq
\Alt_{T_i}$.
This is impossible, as $\Alt_{T_i}$ is of index $p^k$ in $E$ for an odd
prime $p$.
We therefore know that $T_i \cap T_{i+1}$ has more than two elements for
every $i$. Then by applying Lemma \ref{lem:altoverlap} we conclude that 
$E$ contains $\Alt_{T_1 \cup T_2 \cup \cdots \cup T_k}$.
Since $E$ contains $\Alt_T$ as a subgroup of prime power index and $T_1
\cup \cdots \cup T_k \neq T_1$, it follows that $|T_1 \cup \cdots \cup
T_k| = p^k$ and in fact $E$ contains $\Alt_{T_1 \cup \cdots \cup T_k}$ as
a subgroup of index $p^\ell$ for some $\ell$.

We may therefore assume, after replacing $T$ with $T_1\cup \dotsb \cup
T_k$ if necessary, that $E$ leaves $T$ invariant. 
Then $E \leq \Sym_T \times Q$ where $Q$ is a group with support disjoint from $T$.
Let $\pi : \Sym_T \times Q \to \Sym_T$ be the projection onto the first coordinate.
By Lemma \ref{lem:grouptheory}, $[\pi(E):\Alt_T]$ divides $[E:\Alt_T]$ and hence is a power of $p$.
It follows that $\pi(E) = \Alt_T$, as $\Alt_T$ is a maximal subgroup of $\Sym_T$ of index two.
Further, since $\Alt_T$ is normal, we apply Lemma \ref{lem:grouptheory} to the map $\psi : \Alt_T \times Q \to Q$ to see that $|\psi(E)|$ is a power of $p$.
Applying Lemma \ref{lem:decomp} we obtain the desired conclusion.
\end{proof}

The prime $p = 2$ requires relaxing the conclusion of Lemma \ref{lem:conncomp}, since any symmetric group on three or more elements contains an alternating group of index 2.
\begin{lemma} \label{lem:conncompeven}
Let $S \subseteq X$ be a set of cardinality $2^k$ such that $k > 2$.
Vertices of the component $\comp{S}{X}$ in $\Gamma_p(\Alt_X)$ is at least one of two types:
\begin{enumerate}
\item[]{\bf Type 1'.} subgroups $V$ such that $\Alt_T \times 1 \leq V \leq \Sym_T \times P$, where $|T| = 2^k$ and $P \leq \Alt_X$,  and
\item[]{\bf Type 2'.} subgroups $V$ such that $\Alt_T \times 1 \leq V \leq \Sym_T \times P$, where $|T| = 2^k-1$ and $P \leq \Alt_X$.
\end{enumerate}
In either case, $P$ is a $2$-group with support in $T^c$.
\end{lemma}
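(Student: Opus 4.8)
The plan is to mimic the proof of Lemma \ref{lem:conncomp}, with the one essential change forced by the fact that $[\Sym_T : \Alt_T] = 2$ is now a power of the prime in play. Since $\comp{S}{X}$ is connected and contains $\Alt_S$, which is of Type 1' (take $T = S$ and $P$ trivial), it suffices to show that if $E$ is adjacent to a vertex $V$ of Type 1' or Type 2', then $E$ is itself of Type 1' or Type 2'. Throughout we use the elementary observation that an even permutation of $X$ whose support lies in a subset $T \subseteq X$ is an even permutation of $T$; equivalently, $\Alt_X \cap \Sym_T = \Alt_T$.

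So suppose $E$ is adjacent to $V$, and write $\Alt_T \leq V \leq \Sym_T \times P$ with $|T| \in \{2^k, 2^k - 1\}$ and $P$ a $2$-group supported in $T^c$. Adjacency forces $[V : E \cap V]$, and hence its divisor $[\Alt_T : \Alt_T \cap E] = [\Alt_T : \Alt_T \cap (E \cap V)]$, to be a power of $2$. Since $|T| \geq 7$, Theorem 1(a) in \cite{MR700286} shows $\Alt_T \cap E = \Alt_{T'}$, where $T' = T$ or, when $|T| = 2^k$, $T' = T \setminus \{v\}$ for some $v \in T$; in either case $|T'| \in \{2^k, 2^k - 1\}$. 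Projecting $E \cap V \leq \Sym_T \times P$ onto its two factors and using both $\Alt_X \cap \Sym_T = \Alt_T$ and the fact that $P$ is a $2$-group, one gets that $[E \cap V : \Alt_{T'}]$ — and therefore $[E : \Alt_{T'}]$ — is a power of $2$. Relabelling, we may assume $E$ contains $\Alt_T$ as a subgroup of $2$-power index, with $|T| \in \{2^k, 2^k - 1\}$.

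It remains to produce a subset $T_{\ast}$, with $|T_{\ast}| \in \{2^k, 2^k - 1\}$, that $E$ leaves invariant and with $\Alt_{T_{\ast}} \leq E$ of $2$-power index; granting this, the projection $E \to \Sym_{T_{\ast}^{c}}$ has kernel $E \cap \Sym_{T_{\ast}} = \Alt_{T_{\ast}}$ (again since $\Alt_X \cap \Sym_{T_{\ast}} = \Alt_{T_{\ast}}$) and image a $2$-group $P$, so $\Alt_{T_{\ast}} \leq E \leq \Sym_{T_{\ast}} \times P$ and $E$ is of Type 1' or Type 2'. If $E$ already leaves $T$ invariant, take $T_{\ast} = T$. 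Otherwise let $\{T_1 = T, T_2, \dots, T_m\}$ with $m \geq 2$ be the $E$-orbit of the subset $T$; then $\Alt_{T_i} = g_i \Alt_T g_i^{-1} \leq E$ for suitable $g_i \in E$, and each $\Alt_{T_i}$ has $2$-power index in $E$. This is where the argument departs from Lemma \ref{lem:conncomp}: a pair with $|T_i \cap T_j| \leq 1$ no longer yields a contradiction via a $\mathbb{Z}/2$-factor (index $2$ is now permitted), but since $|T_i \setminus T_j| \geq 2^k - 2 \geq 6$ one can pick a $3$-cycle $c$ supported on $T_i \setminus T_j$; then $c$ commutes with $\Alt_{T_j}$, so $E \supseteq \Alt_{T_j} \times \langle c \rangle$ has $\Alt_{T_j}$ of index $3$ inside it, contradicting that $[E : \Alt_{T_j}]$ is a power of $2$. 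Hence every pair in the orbit overlaps in at least two points, and iterating Lemma \ref{lem:altoverlap} gives $\Alt_U \leq E$ with $U = T_1 \cup \dots \cup T_m$; this $U$ is $E$-invariant. Finally, since $U$ properly contains $T$, the index $[\Alt_U : \Alt_T] = |U|!/|T|!$ contains the factor $|T| + 1$ and must divide a power of $2$: when $|T| = 2^k$ this is impossible, and when $|T| = 2^k - 1$ it forces $|U| = 2^k$ (otherwise the factor $2^k + 1$ appears), so we may take $T_{\ast} = U$.

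I expect the main obstacle to be the bookkeeping of the second paragraph: one must keep careful track of which permutations of $T$ are even in $\Sym_X$, and accommodate the possibility that $\Alt_T \cap E$ is a point stabilizer $\Alt_{T \setminus \{v\}}$ rather than all of $\Alt_T$, so that the relevant indices remain powers of $2$ at each stage. The orbit-growth argument of the third paragraph requires the new $\mathbb{Z}/3$ device in place of the $\mathbb{Z}/2$ device used for odd primes, but is otherwise routine.
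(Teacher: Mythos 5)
Your proof is correct and follows the same overall strategy as the paper's: reduce to showing that a neighbor $E$ of a Type 1'/2' vertex contains some $\Alt_T$ with $|T|\in\{2^k,2^k-1\}$ of $2$-power index, rule out non-invariance of $T$ via the orbit argument with a $3$-cycle (order-$3$ direct factor) obstruction plus Lemma \ref{lem:altoverlap} and the odd factor $2^k+1$, and finish by projecting to $\Sym_{T^c}$. The only local difference is in the first reduction: the paper passes through Lemma \ref{lem:intonenormal} and Lemma \ref{lem:decomp} applied to $(E\cap V)\cap(\Alt_T\times P)$, whereas you apply Guralnick's theorem directly to $\Alt_T\cap E$ (this uses, and you should say explicitly, that $\Alt_T\times 1$ is normal in $\Sym_T\times P$, hence in $V$, so that $[\Alt_T:\Alt_T\cap E]$ divides $[V:E\cap V]$) and then handle the $P$-direction by the projection $E\cap V\to P$; both routes rest on the same input and yield the same intermediate statement, and your write-up is in fact more explicit than the paper's at the final invariant-$T$ step, which the paper leaves implicit.
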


\begin{proof}
Since elements in $\bs{S}{X}$ are of Type 1' or 2', it suffices to show that any $E$ that is adjacent to an element of one of the types must itself be of one of the types.

Let $E$ be adjacent to some $V$ with $\Alt_T \times 1 \leq V \leq
\Sym_T \times P$ where $P$ is a $2$-group.
Because $V$ has index a power of $2$ in $\Sym_T\times P$, we know that
$E\cap V$ also has index a power of $2$ in $\Sym_T\times P$.
Since $\Alt_T \times P$ is a normal subgroup of $\Sym_T \times P$, we have by Lemma \ref{lem:intonenormal} that $(\Alt_T \times P) \cap E \cap V$ has index a power of $2$ in $\Sym_T \times P$, and hence in $\Alt_T\times P$.
By Lemma \ref{lem:decomp}, $E \cap V \cap (\Alt_T \times P) = \Alt_T \times
P'$ or $E\cap V \cap (\Alt_T\times P) = \Alt_{T \setminus \{ v \}}\times P'$ where $P' \leq P$ and $v \in T$.
We conclude that $\Alt_T \times 1$ or $\Alt_{T \setminus \{ v \}}
\times 1$  has index a power of $2$ in $E\cap V \cap(\Alt_T\times P)$,
and hence has index a power of $2$ in $E$.
We will therefore assume without loss of generality that $E$ contains
$\Alt_T \times 1 = \Alt_T$ as a subgroup with index a power of $2$, where $|T| = 2^k$ or $|T| = 2^k -1$.

Suppose that $E$ does not leave $T$ invariant.
Let $T_1, T_2, \cdots, T_k$ be the orbit of $E$ acting on $T$ and note
that $E$ contains $\Alt_{T_i}$ for each $i$.
Suppose $T_i \cap T_{i+1}$ has fewer than two elements for some $i$. 
The group $\Alt_{T_{i}}$ contains
$\Alt_{T_{i}\setminus T_i \cap T_{i+1}}$, which includes a permutation of
order $3$ because $|T_i| > 3$.
Hence $E$ contains $\Alt_{T_i} \times \mathbb{Z}/3\mathbb{Z} \geq
\Alt_{T_i}$.
This is impossible, as $\Alt_{T_i}$ is of $2$ power index in $E$.
We therefore know that $T_i \cap T_{i+1}$ has more than two elements for
every $i$. Then by applying Lemma \ref{lem:altoverlap} we conclude that 
$E$ contains $\Alt_{T_1 \cup T_2 \cup \cdots \cup T_k}$.
Since $E$ contains $\Alt_T$ as a subgroup of prime power index and $T_1
\cup \cdots \cup T_k \neq T_1$, it follows that $|T_1 \cup \cdots \cup
T_k| = 2^k$ and in fact $E$ contains $\Alt_{T_1 \cup \cdots \cup T_k}$ as
a subgroup of index $2^\ell$ for some $\ell$.

We may therefore assume, after replacing $T$ with $T_1\cup \dotsb \cup
T_k$ if necessary, that $E$ leaves $T$ invariant. 
Then $\Alt_T \times 1 \leq E \leq \Sym_T \times Q$ where $Q$ is a 2-group with support disjoint from $T$, as desired.
\end{proof}

Note that groups of Type 1 and Type 2 are of Type 1' and Type 2' respectively.
The next result allows us to restrict attention to geodesics in $\bs{S}{X}$ when computing distances there.

\begin{figure}[t]
\begin{overpic}[width=1\textwidth]{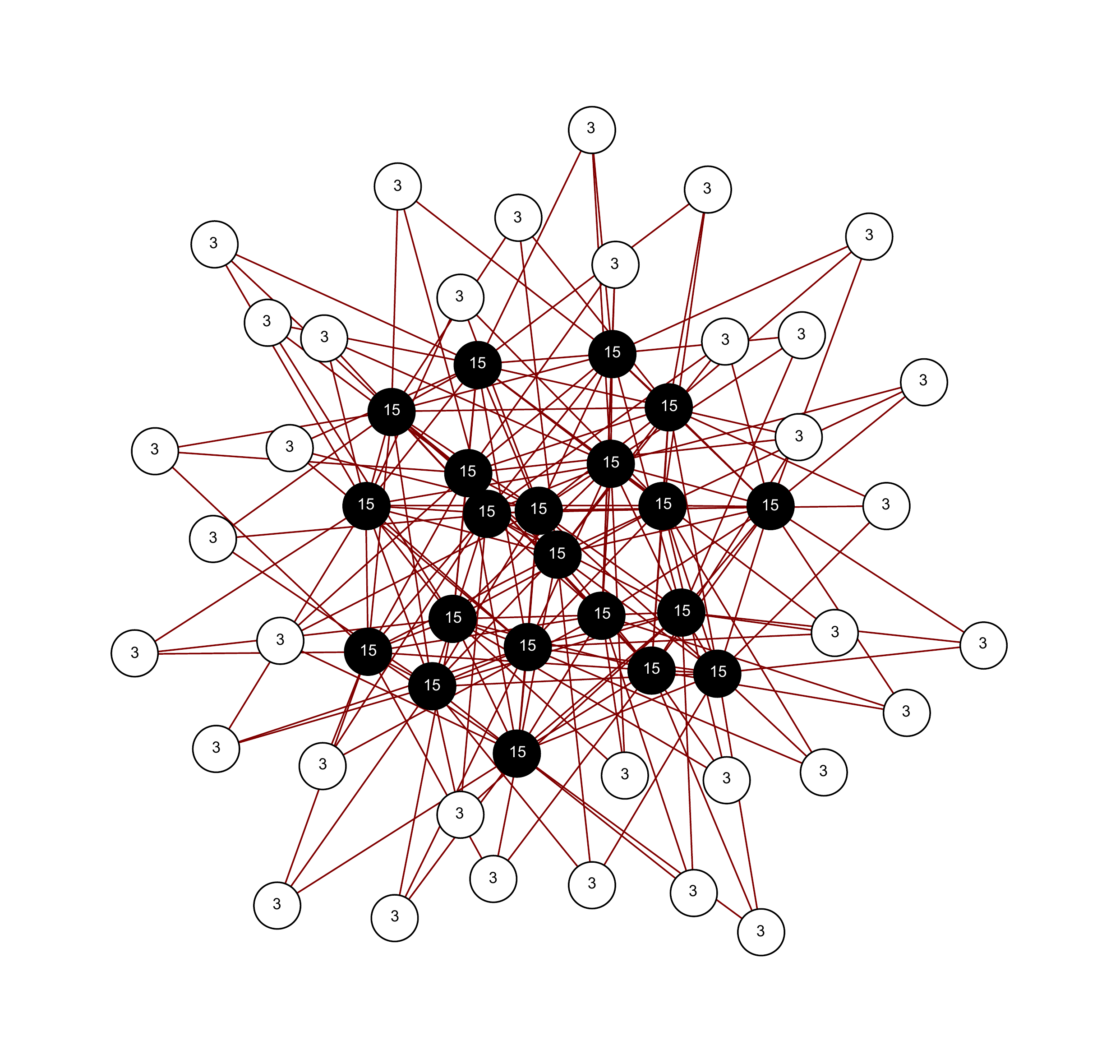}
\end{overpic}
\caption{$\comp{S}{X}$ with $|S| = 5$ and $|X| = 7$. The coloring gives the types and the numbers give the valence of each vertex.
This figure was generated using GAP \cite{GAP4} and Mathematica \cite{MATHEMATICA}}\label{fig:A7A5p5}
\end{figure}

\begin{lemma} \label{lem:Bgeodesic}
Let $S \subseteq X$ be a set of cardinality $p^k > 4$ for some prime 
$p$ and integer $k$.
Then $\bs{S}{X}$ is a geodesic metric space in the path metric induced from $\comp{S}{X}$.
\end{lemma}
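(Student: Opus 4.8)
The plan is to show that $\bs{S}{X}$, viewed as the induced subgraph of $\comp{S}{X}$ on the relevant vertex set, realizes the same distances as $\comp{S}{X}$: given $A,B\in\bs{S}{X}$, any $\comp{S}{X}$-path between them can be pushed into $\bs{S}{X}$ without increasing its length. Since $\bs{S}{X}$ is a subgraph, distances in it are automatically at least those in $\comp{S}{X}$, so this yields equality, and a geodesic of $\comp{S}{X}$ between points of $\bs{S}{X}$ is pushed to a geodesic of $\bs{S}{X}$. The pushing is a ``projection'' of vertices: by Lemmas \ref{lem:conncomp} and \ref{lem:conncompeven}, every vertex $V$ of $\comp{S}{X}$ sits in a sandwich $\Alt_T\le V\le\Sym_T\times P$ with $T\subseteq X$, $|T|\in\{p^k,p^k-1\}$, and $P$ a $p$-group with support in $T^c$ (with $V=\Alt_T\times P$ when $p$ is odd); I will replace $V$ by $\Alt_T$.

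The technical heart is the claim that \emph{if $V$ and $W$ are adjacent in $\comp{S}{X}$, with associated sets $T$ and $T'$, then $\Alt_T$ and $\Alt_{T'}$ are equal or adjacent in $\Gamma_p(\Alt_X)$, and both belong to $\bs{S}{X}$.} First, from $\Alt_T\le V\le\Sym_T\times P$ one gets that $[V:\Alt_T]$ divides $2|P|$ (and equals $|P|$ for odd $p$), hence is a power of $p$, so $\Alt_T$ is equal or adjacent to $V$; thus $\Alt_T\in\comp{S}{X}$, and since $|T|\in\{p^k,p^k-1\}$ it lies in $\bs{S}{X}$ (and symmetrically for $\Alt_{T'}$). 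Next, set $M=V\cap W\cap(\Alt_T\times P)$. Using the sandwich and that $[V:V\cap W]$ is a power of $p$ — with the odd case ($M=V\cap W\le V=\Alt_T\times P$) and the case $p=2$ (where the index-$2$ contributions of $\Sym_T$ are themselves powers of $p$) handled separately — one verifies that $[\Alt_T\times P:M]$ is a power of $p$. Since $p^k>4$, Lemma \ref{lem:decomp} applies to $M\le\Alt_T\times P$ and gives $M=\Alt_U\times P''$ with $U\subseteq T$, $|U|\in\{p^k,p^k-1\}$ and $P''\le P$; in particular $[\Alt_T:\Alt_U]$ divides $[\Alt_T\times P:M]$ and so is a power of $p$.

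It remains to show $U\subseteq T'$, so that $\Alt_U\le\Alt_T\cap\Alt_{T'}=\Alt_{T\cap T'}$. Here $\Alt_U\le M\le W\le\Sym_{T'}\times P'$. I will fix an odd prime $r\neq p$ with $r\le|U|$ (one can take $r=3$, or $r=5$ when $p=3$; this is always possible since $|U|\ge p^k-1$ is large enough). Any $r$-cycle on points of $U$ is an even permutation, hence lies in $\Alt_U\le\Sym_{T'}\times P'$; being a single cycle, its support must lie entirely inside $T'$ or entirely inside $X\setminus T'$. Letting the cycle's support range over all $r$-subsets of $U$ forces $U\subseteq T'$ or $U\subseteq X\setminus T'$, and in the second case $\Alt_U$ would be a subgroup of the $p$-group $P'$, contradicting the presence of an element of order $r$. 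Hence $U\subseteq T'$, and then $[\Alt_T:\Alt_T\cap\Alt_{T'}]$ divides $[\Alt_T:\Alt_U]$, a power of $p$; the symmetric argument (with the roles of $V,W$ and $T,T'$ interchanged, using $M'=V\cap W\cap(\Alt_{T'}\times P')$) shows $[\Alt_{T'}:\Alt_T\cap\Alt_{T'}]$ is a power of $p$. Therefore $\Alt_T$ and $\Alt_{T'}$ are adjacent or equal in $\Gamma_p(\Alt_X)$, proving the claim.

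To finish, take $A,B\in\bs{S}{X}$ and a path $A=V_0,V_1,\dots,V_n=B$ in $\comp{S}{X}$. A vertex of $\bs{S}{X}$ is isomorphic to an alternating group of degree $p^k$ or $p^k-1$ while also being sandwiched as above, which (by simplicity when the degree exceeds $4$, and by a direct order count in the single remaining case, degree $4$, i.e. $p^k=5$) forces the accompanying $p$-group to be trivial; so $A=\Alt_{T_0}$ and $B=\Alt_{T_n}$. Replacing each $V_i$ by the associated $\Alt_{T_i}$ and deleting repetitions, the claim turns the path into a path from $A$ to $B$ lying in $\bs{S}{X}$ of length at most $n$, which completes the argument. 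I expect the main obstacle to be the technical heart, and within it the verification that $[\Alt_T\times P:M]$ is a power of $p$ — which genuinely needs the cases $p$ odd and $p=2$ treated differently, because $[\Sym_T:\Alt_T]=2$ is a power of $p$ only when $p=2$ — together with the cycle-splitting step locating $U$ inside $T'$, where $p=3$ forces the use of a $5$-cycle in place of a $3$-cycle.
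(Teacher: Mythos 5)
Your proposal is correct and follows essentially the same route as the paper: associate to each vertex of $\comp{S}{X}$ its set $T$ via Lemmas \ref{lem:conncomp} and \ref{lem:conncompeven}, show that adjacency of two vertices forces $\Alt_T$ and $\Alt_{T'}$ to be equal or adjacent (an index chase, separated by parity exactly as you indicate, followed by Lemma \ref{lem:decomp}), and then push any path into $\bs{S}{X}$ vertex by vertex. The only genuine difference is local: where the paper pins down the containment of the resulting set inside $T\cap T'$ by invoking the uniqueness clause of Lemma \ref{lem:conncomp} (and, for $p=2$, by applying Guralnick's theorem directly to $H_1\cap H_2\cap\Lambda$), you get it by your elementary cycle-support argument, and you also make explicit the identification of elements of $\bs{S}{X}$ with standard subgroups $\Alt_T$, which the paper leaves implicit; both of these variants check out.
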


\begin{proof}
We first need a local fact.
Let $V_1,V_2$ be two adjacent vectors in $\comp{S}{X}$.
If $p$ is odd, then by Lemma \ref{lem:conncomp} we have $V_i =
\Alt_{T_i} \times P_i$, where $|T_i| = p^k$ or $|T_i| = p^k-1$ and the support of $P_i$ is disjoint from $T_i$ for $i = 1,2$.
If $p = 2$, then by Lemma \ref{lem:conncompeven}, $\Alt_{T_i} \times 1
\leq V_i \leq \Sym_{T_i} \times P_i$, where $|T_i| = p^k$ or $|T_i| = p^k-1$ and the support of $P_i$ is disjoint from $T_i$ for $i = 1,2$.
We claim that in either case, $\Alt_{T_1}$ and $\Alt_{T_2}$ are connected by an edge in
$\Gamma_p(\Alt_X)$.

Since $V_1$ and $V_2$ are adjacent, we have that
$$
[V_1 : V_1 \cap V_2] [V_2 : V_1 \cap V_2]
$$
is a power of $p$.
Thus, when $p$ is odd, Lemma \ref{lem:decomp} applied twice along with
the uniqueness in Lemma \ref{lem:conncomp} gives that $V_1 \cap V_2$
is $\Alt_{S} \times P$ where $S \subseteq T_1 \cap T_2$ satisfies $|S| = p^k$ or $|S| = p^k-1$ and $P \leq P_1 \cap P_2$. Thus it is straightforward to see that $\Alt_{T_1}$ is adjacent to $\Alt_{T_2}$.

When $p = 2$, set $H_i = \Alt_{T_i} \times 1$ and $\Lambda = V_1 \cap V_2$.
Then $H_i$ is normal in $V_i$, thus $H_i \cap \Lambda$ is normal in $\Lambda$.
Since $[\Sym_{T_i} \times P_i : H_i]$ is a power of 2 and 
$$[\Sym_{T_i}\times P_i :V_i][V_i:H_i] = [\Sym_{T_i} \times P_i: H_i],$$
we get $[V_i:H_i]$ is a power of 2. Since $H_i$ is normal in $V_i$, Lemma \ref{lem:intonenormal} implies that $[V_i:H_i \cap \Lambda]$ is a power of 2. Further, as $[V_i:\Lambda]$ is a power of 2 and
$$
[V_i: \Lambda][\Lambda : H_i \cap \Lambda] = [V_i : H_i \cap \Lambda ]
$$
we conclude that $[\Lambda:H_i \cap\Lambda]$ is a power of 2 for $i = 1,2$.
Thus, applying Lemma \ref{lem:intonenormal} to $H_1 \cap \Lambda \lhd \Lambda$ and $H_2 \cap \Lambda \lhd \Lambda$, we have that $H_1 \cap H_2 \cap \Lambda$ has index a power of $2$ in $\Lambda$.
As
$$[V_i : \Lambda][\Lambda:H_1 \cap H_2 \cap \Lambda] = [V_i : H_1 \cap H_2 \cap \Lambda],$$
it follows that $[V_i:H_1 \cap H_2 \cap \Lambda]$ is a power of 2. Because $[V_i:H_i]$ is also a power of $2$ (shown above) and
$$
[V_i: H_i][H_i : H_1 \cap H_2 \cap \Lambda] = [V_i: H_1 \cap H_2 \cap \Lambda]
$$
we have $[H_i:H_1 \cap H_2 \cap \Lambda]$ is a power of $2$ for
each $i$.
By applying Theorem 1(a) in \cite{MR700286} and the uniqueness in Lemma \ref{lem:conncomp}, we have $H_1 \cap H_2 \cap
\Lambda$ is $\Alt_{S}$ for some $S \subseteq
T_1\cap T_2$ with $|S| = p^k$ or $|S| = p^k -1$.
Thus, $\Alt_{T_1}$ is adjacent to $\Alt_{T_2}$, as claimed.

Now let $\gamma$ be a path in $\comp{S}{X}$ that, except for its endpoints, is entirely in the complement of $\bs{S}{X}$.
Enumerate the vertices of $\gamma$ in the order they are traversed,
$$
V_1, V_2, \ldots, V_m, \text{ where } \Alt_{T_i} \times 1 \leq V_i \leq \Sym_{T_i} \times P_i \text{ for all } i = 1, \ldots, m
$$
Then by the previous claim, we may form a new path (after throwing out repeated vertices)
$$
\Alt_{T_{i_1}} , \Alt_{T_{i_2}}, \ldots, \Alt_{T_{i_n}}.
$$
that is entirely contained in $\bs{S}{X}$ and has the same endpoints as $\gamma$.
It follows that $\bs{S}{X}$ is geodesic in $\comp{S}{X}$, as desired.
\end{proof}

\begin{proposition} \label{prop:longpaths}
Let $S \subseteq X$ be a set of cardinality $p^k > 4$ for some prime $p$
and integer $k$.
There exists $V,W \in \bs{S}{X}$ such that any path in $\comp{S}{X}$ connecting $V$ to $W$ has length at least $p^k - \max\{ 0, 2p^k - |X|\}$.
\end{proposition}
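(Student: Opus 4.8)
The plan is to exhibit two explicit vertices of $\bs{S}{X}$ that are ``as far apart as possible'' and then bound the length of any connecting path by tracking how the underlying support set is allowed to change along a single edge. Concretely, I would take $V = \Alt_S$ and $W = \Alt_{S'}$, where $S'\subseteq X$ is a subset with $|S'| = p^k$ chosen so that $|S\cap S'|$ is as small as possible. Since $S'$ can use at most $|X| - p^k$ points lying outside $S$, this minimum is $\max\{0, 2p^k - |X|\}$, and hence $|S'\setminus S| = p^k - \max\{0, 2p^k - |X|\}$. Both $V$ and $W$ are of Type 1 (with trivial $p$-group factor), so the first thing to verify is that they actually lie in the same component $\comp{S}{X}$, i.e.\ in $\bs{S}{X}$. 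For this I would join $S$ to $S'$ by a chain of single-point swaps $S = U_0, U_1, \dotsc, U_m = S'$ with $|U_i| = p^k$ and $|U_i\cap U_{i+1}| = p^k - 1$; the index computation $[\Alt_{U_i} : \Alt_{U_i\cap U_{i+1}}] = (p^k)!/(p^k-1)! = p^k$ (and symmetrically for $U_{i+1}$) shows that $\Alt_{U_i}$ and $\Alt_{U_{i+1}}$ are adjacent in $\Gamma_p(\Alt_X)$, so $\Alt_S$ and $\Alt_{S'}$ are connected and $V, W \in \bs{S}{X}$.

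Next I would reduce to paths lying inside $\bs{S}{X}$: by Lemma \ref{lem:Bgeodesic}, any path in $\comp{S}{X}$ joining two vertices of $\bs{S}{X}$ can be replaced by a path of no greater length lying entirely in $\bs{S}{X}$, so it suffices to bound below the length of a path $V = \Alt_{T_0}, \Alt_{T_1}, \dotsc, \Alt_{T_\ell} = W$ contained in $\bs{S}{X}$. The key local fact is that adjacency forces the supports to overlap in all but at most one point: if $\Alt_{T_1}$ and $\Alt_{T_2}$ are adjacent in $\bs{S}{X}$, then $\Alt_{T_1}\cap \Alt_{T_2} = \Alt_{T_1\cap T_2}$, and $[\Alt_{T_1}:\Alt_{T_1\cap T_2}] = |T_1|!/|T_1\cap T_2|!$ must be a power of $p$. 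If $T_1\setminus T_2$ had two or more points, this ratio would be a product of consecutive integers including $|T_1|$ and $|T_1|-1$, which are coprime and both exceed $1$, hence not a prime power; so $|T_1| - |T_1\cap T_2| \leq 1$, and symmetrically $|T_2| - |T_1\cap T_2| \leq 1$. (This is essentially the computation already performed in the proof of Lemma \ref{lem:Bgeodesic}.) In particular $|T_{i+1}\setminus T_i| \leq 1$ for every $i$.

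Finally I would run a potential-function argument. Set $\phi(T) = |S'\setminus T|$. Adding at most one new point to $T$ removes at most one point from $S'\setminus T$, so $\phi(T_{i+1}) \geq \phi(T_i) - 1$ along each edge. Since $T_0 = S$ and $T_\ell = S'$, we have $\phi(T_0) = |S'\setminus S| = p^k - \max\{0, 2p^k - |X|\}$ and $\phi(T_\ell) = 0$; telescoping then gives $\ell \geq \phi(T_0) - \phi(T_\ell) = p^k - \max\{0, 2p^k - |X|\}$. Combined with the reduction of the second paragraph, this shows that every path in $\comp{S}{X}$ joining $V$ to $W$ has length at least $p^k - \max\{0, 2p^k - |X|\}$, as claimed.

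The only genuinely substantive point — and the step I would treat most carefully — is the local adjacency claim, namely that traversing a single edge of $\comp{S}{X}$ changes the support set by at most one point-swap; once that is in hand, the rest is bookkeeping with the potential $\phi$. One must also check the minor points that a minimizing $S'$ exists (it does, as $|X| \geq |S| = p^k$) and that $V = \Alt_S$ and $W = \Alt_{S'}$ are honestly of Type 1, so that their associated support sets are unambiguously $S$ and $S'$.
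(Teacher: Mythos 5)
Your proof is correct and follows essentially the same route as the paper: reduce to paths inside $\bs{S}{X}$ via Lemma \ref{lem:Bgeodesic}, observe that adjacency of $\Alt_{T_i}$ and $\Alt_{T_{i+1}}$ forces the supports to change by at most one point (your coprime-consecutive-integers computation plays the role of the paper's type-by-type case analysis), and count. Your explicit verification that $\Alt_S$ and $\Alt_{S'}$ lie in the same component via single-point swaps is a small but welcome addition that the paper leaves implicit.
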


\begin{proof}
By Proposition \ref{lem:Bgeodesic}, it suffices to show that there exists $V,W \in \bs{S}{X}$ such that any path in $\bs{S}{X}$ has length greater than $|X|-p^k$.
Let $O_1,O_2 \subseteq X$ with $|O_1 \cap O_2| \leq \max\{ 0,  2p^k-|X| \}$.
Let $E_1, E_2, \ldots, E_m$ be distinct vertices in a non-back-tracking path in $\bs{S}{X}$ connecting $\Alt_{O_1}$ to $\Alt_{O_2}$.
Let $T_1, T_2, \ldots, T_m$ be subsets of $X$ such that $E_i = \Alt_{T_i}$ for $i =1, \ldots, m$.
For each $i = 1, \ldots, m$, we have one of three cases:
\begin{enumerate}
\item $E_i$ is Type 1 and $E_{i+1}$ is Type 1:
In this case, $|T_{i+1} \cap T_i| =  |T_i| - 1 = p^k-1$.

\item $E_i$ is Type 1 and $E_{i+1}$ is Type 2:
In this case, $T_{i+1} \subset T_i$ and $|T_{i+1}| = |T_i| -1 = p^k-1$.

\item $E_i$ is Type 2 and $E_{i+1}$ is Type 1:
In this case, $T_{i+1} \supset T_i$ and $|T_{i+1}| = |T_i| + 1 = p^k$.

\item $E_i$ is Type 2 and $E_{i+1}$ is Type 2:
This case never occurs, as $[\Alt_T : \Alt_U]$ is not a power of $p$ for
any proper subset $U\subset T$ with $|T|=p^k-1$.
\end{enumerate} 

\noindent
Thus, we see that for each $i$, we see that $T_i$ and $T_{i+1}$ differ by moving, adding, or removing at most one element.
It follows that $m \geq p^k - |O_1 \cap O_2| \geq p^k - \max\{ 0 , 2p^k - |X| \}.$
\end{proof}

\begin{proof}[Proof of Theorem \ref{thm:freegroup}]
  Let $F$ be a rank two free group and $p$ a prime. Given $N>0$, choose $k$ so that
  $p^k > N$ and $p^k > 4$. For any finite set $X$ with $|X| > 2p^k$, let $\gamma_X$
  be a path of length $p^k$ in $\Gamma_p(\Alt_X)$ guaranteed by
  Proposition \ref{prop:longpaths}. Then pulling back
  $\gamma_X$ over any surjection $\pi : G \to \Alt_X$ produces a path of
  length $p^k$ in $\Gamma_p(F)$ by Lemma \ref{lem:embeddings}. By
  Lemma \ref{lem:samecomp}, sets $X_1$ and $X_2$ with relatively prime
  cardinalities will produce geodesics in different components of $\Gamma_p(F)$.
\end{proof}


\begin{proof}[Proof of Corollary \ref{cor:largegroup}]
Let $G$ be a large group, $p$ a prime, and $N > 0$.
Since a finite-index subgroup of a nonabelian free group is nonabelian, there exists a normal finite-index subgroup $H \leq G$ that surjects onto $F$, the free group of rank $2$.
By Lemma \ref{lem:contraction} and Theorem \ref{thm:freegroup}, there exists vertices $V, W \in \Gamma_p(H)$ such that any path connecting them in $\Gamma_p(G)$ has length greater than $N$.
The result now follows from Lemma \ref{lem:embeddings}, as $\Gamma_p(H)$ isometrically embeds into $\Gamma_p(G)$.
\end{proof}

\begin{corollary} \label{cor:nonormal}
Let $G$ be a large group and $p$ be a prime.
There exists a connected component of $\Gamma_p(G)$ that does not contain any normal subgroup.
\end{corollary}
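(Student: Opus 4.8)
The plan is to combine Theorem \ref{thm:freegroup} (applied through the large-group machinery of Corollary \ref{cor:largegroup}) with the diameter bound of Proposition \ref{prop:freegroupfinitediam}. Proposition \ref{prop:freegroupfinitediam} tells us that if a component $\Omega$ of $\Gamma_p(G)$ contains a normal subgroup of $G$ as a vertex, then $\mathrm{diam}(\Omega) \leq 3$. So it suffices to produce, for a large group $G$ and a fixed prime $p$, a component of $\Gamma_p(G)$ of diameter at least $4$: such a component cannot contain any normal subgroup of $G$.

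First I would invoke Corollary \ref{cor:largegroup} with $N = 4$ (or any $N \geq 4$): this yields a geodesic $\gamma$ in $\Gamma_p(G)$ of length greater than $4$. Let $\Omega$ be the component of $\Gamma_p(G)$ containing $\gamma$. Since $\gamma$ is a geodesic, its two endpoints are at distance $>4$ in $\Gamma_p(G)$, hence at distance $>4$ in $\Omega$; in particular $\mathrm{diam}(\Omega) > 4 > 3$. By the contrapositive of the last sentence of Proposition \ref{prop:freegroupfinitediam}, $\Omega$ contains no vertex that is a normal subgroup of $G$. This gives the desired component.

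There is essentially no obstacle here — the statement is an immediate synthesis of two results already established in the excerpt. The only point that warrants a word of care is making sure the geodesic $\gamma$ produced by Corollary \ref{cor:largegroup} genuinely realizes the distance inside its component $\Omega$ (so that the diameter of $\Omega$, and not merely the ambient graph, exceeds $3$); but since a geodesic in a graph is by definition a shortest path between its endpoints, and shortest paths between two vertices of the same component lie entirely within that component, the distance computed in $\Omega$ agrees with that in $\Gamma_p(G)$. Thus the argument is complete.

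\begin{proof}[Proof of Corollary \ref{cor:nonormal}]
Apply Corollary \ref{cor:largegroup} with $N = 4$ to obtain a geodesic $\gamma$ in $\Gamma_p(G)$ of length greater than $4$. Let $\Omega$ be the connected component of $\Gamma_p(G)$ containing $\gamma$. The endpoints of $\gamma$ are vertices of $\Omega$ at distance greater than $4$ from each other in $\Omega$, so the diameter of $\Omega$ is greater than $3$. By Proposition \ref{prop:freegroupfinitediam}, if $\Omega$ contained a normal subgroup of $G$ as a vertex, its diameter would be at most $3$. Hence $\Omega$ contains no normal subgroup of $G$ as a vertex.
\end{proof}
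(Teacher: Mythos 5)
Your argument is correct and is essentially the same as the paper's: both combine Corollary \ref{cor:largegroup} (components with arbitrarily long geodesics) with the last sentence of Proposition \ref{prop:freegroupfinitediam} (a component containing a normal subgroup has diameter at most $3$). Your extra remark that a geodesic realizes distance within its own component is a fine, if minor, clarification.
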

\begin{proof}
  By Proposition \ref{prop:freegroupfinitediam}, any component of $\Gamma_p(G)$
  containing a normal subgroup as a vertex has diameter at most 3. By
  Corollary \ref{cor:largegroup}, there are components of $G$ with
  arbitrarily long geodesics.
\end{proof}

\bibliography{refs}
\bibliographystyle{amsalpha}


\noindent
Khalid Bou-Rabee \\
Department of Mathematics, CCNY CUNY \\
E-mail: khalid.math@gmail.com \\

\noindent
Daniel Studenmund \\
Department of Mathematics, University of Utah \\
E-mail: dhs@math.utah.edu \\

\end{document}